\documentclass[makeidx]{article}
\usepackage[latin1]{inputenc}
\usepackage{amsmath}
\usepackage{amsfonts}
\usepackage{amssymb}
\usepackage{graphicx}
\usepackage{hyperref}
\usepackage{amssymb,amsmath,amsthm,graphicx, float}

\title{Heat content and exit time moments under domain deformations}
\title{On the heat content functional and its critical domains}
\author{Alessandro Savo}

\date{}
\usepackage{amssymb , amsmath, amsthm}
\oddsidemargin 0cm
\evensidemargin 0cm
\parindent0cm
\textwidth 16cm
\newtheorem{defi}{Definition} 
\newtheorem{thm}[defi]{Theorem}
\newtheorem{theorem}[defi]{Theorem}

\newtheorem{rem}[defi]{Remark}
 \newtheorem{prop}[defi]{Proposition}
\newtheorem{lemme}[defi]{Lemma}

\newtheorem{cor}[defi]{Corollary}

 \newcommand{\twosystem}[2]{\left\{\begin{aligned} &#1\\ &#2\end{aligned}\right.}
\newcommand{\threesystem}[3]{\left\{ \begin{aligned}&#1\\ &#2\\&#3\end{aligned}\right.}

\newcommand{\qtq}{\quad\text{on}\quad}

\newcommand{\nero}{\smallskip$\bullet\quad$\rm}

\newcommand{\parte}[1]{\smallskip\noindent {\rm#1)}\,\,}
\newcommand{\scal}[2]{\langle{#1},{#2}\rangle}

\newcommand{\abs}[1]{\lvert{#1}\rvert}

\newcommand{\reals}{{\bf R}}

\newcommand{\sphere}[1]{{\bf S}^{#1}}
\newcommand{\real}[1]{{\bf R}^{#1}}

\newcommand{\bd}{\partial}

\newcommand{\eps}{\epsilon}

\newcommand{\derive}[2]{\dfrac{\bd #1}{\bd#2}}

\newcommand{\matrice}{\begin{pmatrix}}
\newcommand{\ok}{\end{pmatrix}}

\def\<{\left\langle}
\def\>{\right\rangle}

\begin{document}

\maketitle

%\address{Alessandro Savo, {\rm Dipartimento SBAI, Sezione di Matematica,
%Sapienza Universit\`a di Roma, Via Antonio Scarpa 16
%00161 Roma, Italy}}
 
 %\email{alessandro.savo@uniroma1.it}

\abstract{We study and classify smooth bounded domains in an analytic Riemannian manifold which are critical for the heat content at all times $t>0$. We do that by first computing the first variation of the heat content, and then showing that $\Omega$ is critical if and only if it has the so-called constant flow property, so that we can use a previous  classification result established in \cite{S1} and \cite{S2}. The outcome is that $\Omega$ is critical for the heat content at time $t$, for all $t>0$, if and only if $\Omega$ admits an isoparametric foliation, that is, a foliation whose leaves are all parallel to the boundary and have constant mean curvature. Then, we consider the sequence of functionals given by the exit-time moments $T_1(\Omega),T_2(\Omega),\dots$, which generalize the torsional rigidity $T_1$.
We prove that $\Omega$ is critical for all $T_k$ if and only if $\Omega$ is critical for the heat content at every time $t$, and then we get a classification as well. 
The main purpose of the paper is to understand the variational properties of general isoparametric foliations and their role in PDE's theory; in some respects  they generalize the properties of the foliation of $\real n$ by Euclidean spheres. }

\bigskip

\noindent {\it Classification AMS $2000$}: 58J50, 35N30, 35P15, 58C40\newline
Keywords: Heat content functional, critical domains, exit time moments, isoparametric foliations, constant flow property
\newline
{\it Acknowledgments:} Research partially supported by INDAM and GNSAGA of Italy

%\address{Alessandro Savo, {\rm Dipartimento SBAI, Sezione di Matematica,
%Sapienza Universit\`a di Roma, Via Antonio Scarpa 16
%00161 Roma, Italy}}
 
 %\email{alessandro.savo@uniroma1.it}

\maketitle

%\tableofcontents
\normalsize 
\parindent0cm

\section{Introduction and main results} In this paper we consider smooth bounded domains $\Omega$ in a complete Riemannian manifold $M$, and we study those domains which are critical for the heat content $H_{\Omega}(t)$ of $\Omega$ at any time fixed $t$, under deformations which preserve the inner volume (see the definition in  Section \ref{heatcontent}). The heat content has been amply studied in the literature and there are many important contributions. Of special relevance to this paper are the following aspects:  its asymptotic behaviour for small times (see \cite{vdBG1}, \cite{vdBG2}, \cite{S4}, \cite{S3}, \cite{Gil1}, \cite{Gil2}), comparison theorems of isoperimetric nature (\cite{BS}), its relation with the Dirichlet spectrum and the Brownian motion (\cite{mcDM}, \cite{mcD}), and the geometric rigidity of an associated overdetermined problem (\cite{S1}, \cite{S2}).

\smallskip

 The main scope of this paper is to understand the geometry of critical domains, and possibly give a classification. We observe that that it is easy to prove a classification when $\Omega$ is a domain in Euclidean space $\real n$, hyperbolic space ${\bf H}^n$ or the hemisphere $\sphere n_+$. In all these cases the only critical domains are geodesic balls, which in fact  are absolute maxima for the heat content, by known comparison results (\cite{BS}).
 
 \smallskip
 
 So, our main focus will be on arbitrary ambient Riemannian manifolds $M$. After computing the formula for the first variation of the heat content, we see that critical domains are characterized by the so-called {\it constant flow property} (defined in section \ref{constantfp}), which basically says that the normal derivative of the temperature function (the {\it heat flow}) is constant on the boundary of the domain.  The overdetermined heat equation which follows has been studied in the papers \cite{S1}, \cite{S2} and there, under the assumption that the ambient manifold is analytic, it is shown that a domain supports a solution to such overdetermined heat equation if and only if it is an {\it isoparametric tube}, that is, it admits a foliation by smooth, parallel, constant mean curvature hypersurfaces, degenerating to a leaf of possibly lower dimension; this singular leaf is always a smooth, closed, minimal submanifold of $M$ (see Section \ref{isotubes}).
 
 \smallskip

The fact that the only (compact) isoparametric tubes of $\real n, {\bf H}^n$ and 
 $\sphere n_+$ are the geodesic balls (in which case the leaves are geodesic spheres, and the singular leaf is simply a point, the center of the ball) is a consequence of a classical result due to Cartan; it also follows from the classical Alexandrov theorem and from its generalization to ${\bf H}^n$ and 
 $\sphere n_+$ (see \cite{MoRos}), to the effect that the only embedded, constant mean curvature hypersurfaces of those spaces are geodesic spheres. 
 That is why the full geometric meaning of criticality is perhaps not seen on those spaces. 
 
 \smallskip
 
 It is already seen, however, on the whole sphere $\sphere n$, which hosts a reasonably large, and quite interesting, variety of isoparametric tubes which are not geodesic balls: the simplest such family is given by domains bounded by Clifford tori (products of spheres of appropriate radii). More generally,  it follows from the classical theory that isoparametric tubes in the sphere are bounded by  isoparametric hypersurfaces, that is,  hypersurfaces having constant principal curvatures (we will be more precise in Theorem \ref{inthesphere} below).  For general facts about isoparametric hypersurfaces, see for example \cite{Tho}. The classification of isoparametric hypersurfaces in the sphere is a classical problem in differential geometry which started from Cartan in the 30's (\cite{Car}) and which, after several important intermediate results,  has been completed only recently (\cite{Ch}).

\smallskip
  
 Thus, as a result of our findings and thanks to the collective efforts leading to the final classification \cite{Ch}, the family of critical domains in the whole sphere can be  completely classified, and is in fact much larger than the family of geodesic balls. Criticality for the heat content, in $\sphere n$, implies in particular that any component of the boundary is a smooth algebraic variety, more precisely, it is the zero set of the restriction to $\sphere n$ of a harmonic polynomial in $\real{n+1}$ satisfying precise algebraic conditions ({\it Cartan-M\"untzner polynomials}). We will briefly summarize the main facts about isoparametric foliations on the sphere in section \ref{inthesphere}. 
 
\smallskip

At the moment, there is no classification (in the strict sense of the word) of isoparametric foliations in general, although there is some progress. It is clear however that this condition is quite strict, and that a random manifold will not support any isoparametric foliation. 

\smallskip

The probabilistic aspect of the heat content, its relation with  the Dirichlet spectrum and with the so-called {\it exit-time moments} $\{T_1(\Omega), T_2(\Omega),\dots\}$ have  been studied in several papers (\cite{CLmcD}, \cite{DLmcD}, \cite{mcDM}, \cite{mcD}, \cite{HMP2}, \cite{HMP1}).  We point out that $T_1(\Omega)$ has also an intepretation in mechanics, being the {\it torsional rigidity} of the domain $\Omega$; isoperimetric inequalities for the functional $T_1(\Omega)$ are classical, and were recently extended to the higher exit time moments. For example, it is shown in \cite{mcD} that, in the spaces $\real n, {\bf H}^n$ and $\sphere n_+$ the only domains which are critical for $T_k(\Omega)$, {\it for at least one} $k\in {\bf N}$, are geodesic balls.  We point out that the same rigidity definitely  does not extend to spherical domains not contained in a hemisphere; in fact all isoparametric tubes (and not only geodesic balls) are critical for $T_k(\Omega)$ {\it for all} $k\in{\bf N}$. So, the problem we want to bring to attention is the following: 

\nero classify the domains which are critical for all exit-time moments $T_k(\Omega)$ when $\Omega$ is a domain in an  arbitrary Riemannian manifold. 

\smallskip

The outcome is that this family coincides with the family of domains having the constant flow property, hence also with the family of domains which are critical for the heat content functional at all times: these domains are precisely the isoparametric tubes.  At this point, we summarize the main results and state the main theorem of this paper. In what follows, "critical" means "critical under volume preserving deformations".

\begin{thm}\label{main} Let $\Omega$ be a smooth bounded domain in a complete Riemannian manifold $(M,g)$. The following statements are equivalent:

\smallskip

\item a) $\Omega$ is critical for the heat content $H_{\Omega}(t)$ at every fixed time $t>0$.

\item b) $\Omega$ is critical for the $k$-th exit time moment $T_k(\Omega)$, for all $k\geq 1$.

\item c) $\Omega$ has the constant flow property.

\smallskip

If the ambient Riemannian manifold $M$ is analytic, a), b) and c) are in turn all equivalent to:

\smallskip

\item d) $\Omega$ is an isoparametric tube over a closed, minimal submanifold of $M$.
\end{thm}

This paper is a natural continuation of  \cite{S1} and \cite{S2}, and clarifies the variational meaning of the constant flow property. Its main theme is to stress the relation between variational geometry, overdetermined PDE's and the isoparametric property, which is not fully evident when studying problems in the spaces $\real n, {\bf H}^n$ and $\sphere n_+$ due to the Alexandrov rigidity theorem.  That the isoparametric condition  is sufficient to guarantee the existence of solutions  to many overdetermined problems was perhaps first discussed in detail by Shklover in \cite{Shk}. That this property is actually also necessary (for the constant flow property) was the main outcome of \cite{S1} and \cite{S2}.

The main theorem  shows that certain variational problems, and the overdetermined PDE's they generate, naturally lead to the consideration of isoparametric foliations.  For previous facts on PDE's and the isoparametric theory, we mention the works \cite{Sol1} and \cite{Sol2}, 
where the spectrum of the Laplace-Beltrami operator was computed for cubic isoparametric minimal hypersurfaces of the Euclidean sphere, and the paper \cite{TY} where the first eigenvalue of any closed minimal isoparametric hypersurface of ${\bf S}^{n+1}$ was shown to be equal to $n$, thus confirming Yau's conjecture in these cases.

\smallskip

Another important functional is the Dirichlet heat trace, which is known to be a spectral invariant of a domain $\Omega$. For results about its critical domains, we refer to \cite{ESI}: there it is shown, using the asymptotic expansion of the heat trace and the Alexandrov theorem, that the only critical domains in $\real n, {\bf H}^n$ and $\sphere n_+$ are the geodesic balls. At the moment, there seems to be no classification of domains which are critical for the heat trace in other manifolds, in particular, in the sphere $\sphere n$ of arbitrary  dimension.

\smallskip

In the rest of this introduction, we give the precise definitions and discuss the problem in greater detail. Proofs will be given starting from Section 2, where we prove the first variation formula of the heat content, Theorem \ref{vformula}, which is the main step.
  
%%%%%

\subsection{The heat content} \label{heatcontent} Let $\Omega$ be a smooth, bounded domain in a complete Riemannian manifold $M$. The heat content of $\Omega$ is the function $H_{\Omega}(t)$ of time $t>0$ which measures the total heat inside $\Omega$ at time $t$, assuming that initially the temperature is uniformly distributed and equal to $1$, and that the boundary is subject to absolute refrigeration at all times (Dirichlet boundary conditions). The heat content is expressed as
$$
H_{\Omega}(t)=\int_{\Omega}u(t,x)dv(x)
$$
where $u(t,x)$ is the temperature function, solution of the heat equation:\begin{equation}\label{temp}
\threesystem
{\Delta u+\derive{u}{t}=0\quad\text{on}\quad\text{on}\quad (0,\infty)\times\Omega}
{u(0,x)=1\quad\text{for all }\quad x\in\Omega}
{u(t,y)=0\quad\text{for all }\quad y\in\bd\Omega, \quad t>0}
\end{equation}

It is natural to expect that the extrinsic geometry of $\bd\Omega$  plays a major role in the asymptotic behavior of $H_{\Omega}(t)$ for {\it small times} $t$; in fact, for small times,  only the points near the boundary will feel the sudden drop of the temperature due to the boundary refrigeration (in fact, points far away from the boundary obey the so-called {\it principle of not feeling the boundary}).  This behavior is reflected in an asymptotic series in powers of $\sqrt t$:
\begin{equation}\label{asyseries}
H_{\Omega}(t)\sim \abs{\Omega}+\sum_{k=1}^{\infty}\beta_k(\Omega)t^{\frac k2}\quad\text{as}\quad t\to 0
\end{equation}
whose coefficients are, for what we have just said,  supported on the boundary and depend on the extrinsic curvature of $\bd\Omega$, precisely, the second fundamental form and its covariant derivatives, which in turn involve the ambient curvature tensor together with its covariant derivatives. After some works in Euclidean space, the existence of the asymptotic series in the general Riemannian case, together with the calculation of the coefficients $\beta_k(\Omega)$ up to $k=2$ was carried out by van den Berg and Gilkey \cite{vdBG1}. For more general operators of Laplace-type, with various boundary conditions, see the review papers \cite{Gil1} and  \cite{Gil2}. An algorithm for the computation of the whole asymptotic series \eqref{asyseries} has been developped by the author in \cite{S4} and \cite{S3}; this approach has been recently adapted in \cite{RiRo2} to treat the sub-Riemannian case.

On the other hand, for {\it large times} the heat content is governed by the Dirichlet spectrum of the domain; in particular, $H_{\Omega}(t)$ decays exponentially to zero as $t\to\infty$, with speed proportional to the lowest Dirichlet eigenvalue $\lambda_1(\Omega)$:
\begin{equation}\label{largetimes}
H_{\Omega}(t)\sim c_{\Omega}^2e^{-\lambda_1(\Omega)t}\quad\text{as $t\to\infty$}
\end{equation}
with $c_{\Omega}$ being the integral over $\Omega$ of a unit $L^2$-norm first Dirichlet eigenfunction. This can be easily seen by writing the Fourier series of the heat content. 
%%%
\subsection{Constant flow property}\label{constantfp}

The function $\derive{u}{N}(t,\cdot):\bd\Omega\to\reals$ is called the (pointwise) {\it heat flow} at time $t$; when integrated on the boundary, it measures the speed at which the domain is loosing heat  due to boundary refrigeration:
$$
\dfrac{d}{dt}H_{\Omega}(t)=-\int_{\bd\Omega}\derive{u}{N}(t,y)d\sigma(y)
$$
where $d\sigma$ is the Riemannian surface measure.

Now, for a general domain the heat flow is not uniform (i.e. constant) across the boundary. 
If the heat flow is constant on $\bd\Omega$, at every fixed value of time, then we say that $\Omega$ has the {\it constant flow property}. In other words:

\smallskip

\noindent {\bf Definition.}  {\it The domain $\Omega$ has the {\rm constant flow property} if there exists a smooth function $c: (0,\infty)\to\reals$ such that
\begin{equation}\label{cfp}
\derive{u}{N}(t,y)=c(t)
\end{equation}
for all $y\in\bd\Omega$.}

Adding the condition \eqref{cfp} to the heat equation \eqref{temp} defining $u(t,x)$, we obtain an overdetermined problem, and this overdetermination imposes strict conditions on the geometry of the domain. In the papers \cite{S1} and \cite{S2} we actually give a geometric characterization of the domains with the constant flow property when the ambient manifold $M$ is analytic: they are  {\it isoparametric tubes} (see the definition below) that is, they admit an isoparametric foliation or, in other words, a foliation by parallel hypersurfaces having constant mean curvature. We will discuss this in more details in the next section. 

Before doing that, we remark that for domains in $\real n$ another interesting overdetermined condition on the function $u(t,x)$ was studied in \cite{MagSak} and the works that followed (in particular, the recent work \cite{Sak}): namely, the existence of a stationary isothermic hypersurface $S$ in $\Omega$. Under mild assumptions, it is proved in \cite{MagSak} there that existence of such $S$ forces $\Omega$ to  be a ball. 

%%%

\subsection{Isoparametric tubes}\label{isotubes} Here is the definition.

\begin{defi}\label{iso}  Let $P$ be a smooth, closed submanifold of the domain $\Omega$ of dimension $n$. We say that $\Omega$ is a {\rm smooth tube around $P$} if :
\item a) $\Omega$ is the set of points at distance at most $R$ from $P$,

\item b) For each $s\in (0,R]$, the equidistant
$$
\Sigma_s=\{x\in \Omega: d(x,P)=s\}
$$
is a smooth hypersurface of $\Omega$. 

\smallskip

We say that the smooth tube $\Omega$ is an {\rm isoparametric tube} if every equidistant $\Sigma_s$ as above has constant mean curvature.
\end{defi}

The submanifold $P$ is called the {\it soul} of $\Omega$, and can have dimension 
$\dim P=0,\dots,n-1$. The soul is then  an embedded submanifold, which is always {\it minimal} (see \cite{GeTa}).   Any isoparametric tube has at most two boundary components (for the easy proof see \cite{S2}).

For example, a solid revolution  torus in $\real 3$ with radii $a>b>0$ is a smooth tube (the soul $P$ is a circle), but is not an isoparametric tube because equidistants have variable mean curvature. In fact:

\begin{prop}\label{spaceform} The only (compact) isoparametric tubes in the spaces $\real n, {\bf H}^n$ and $\sphere n_+$ are the geodesic balls (in which case the soul is a point).
\end{prop}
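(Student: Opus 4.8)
The plan is to deduce the statement from the minimality of the soul recalled above, combined with the maximum principle. Let $M$ be $\real n$, ${\bf H}^n$ or $\sphere n_+$, and let $\Omega$ be an isoparametric tube over a closed submanifold $P$; by the quoted fact, $P$ is a minimal submanifold of $M$, and I claim that $P$ must be a single point. Suppose, to the contrary, that $k:=\dim P\geq 1$. Each of the three model spaces carries a smooth function $\varphi:M\to\reals$ whose Hessian is a pointwise nonzero multiple of the metric: on $\real n$ take $\varphi=\abs{x}^2$, so $\mathrm{Hess}\,\varphi=2g$; on ${\bf H}^n$ take $\varphi=\cosh r$ with $r=\dist(\cdot,o)$, so that, using $\mathrm{Hess}\,r=\coth r\,(g-dr\otimes dr)$, one finds $\mathrm{Hess}\,\varphi=\varphi\,g$; on $\sphere n_+$ take $\varphi$ to be the restriction to the sphere of a linear coordinate of $\real{n+1}$ positive on the open hemisphere, so that $\mathrm{Hess}\,\varphi=-\varphi\,g$ with $\varphi>0$ on $\sphere n_+$. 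Since $P$ is minimal, the Laplace--Beltrami operator of $P$ applied to $\varphi\vert_P$ equals $\tr_{TP}\mathrm{Hess}\,\varphi$, which for $k\geq 1$ is everywhere equal, respectively, to $2k$, to $k\varphi$, or to $-k\varphi$ --- in each case a function of a strict constant sign. This is impossible on the compact manifold $P$ (which has no boundary), since such a function must attain both an interior maximum and an interior minimum, at which the sign of its Laplacian is forced. Hence $k=0$.

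A closed zero-dimensional submanifold is a finite set of points; if it contained two of them, then $\Omega$, being the union of the closed balls of radius $R$ about the points of $P$, would be either disconnected or not a smooth domain with smooth equidistants, contradicting Definition \ref{iso}. Therefore $P=\{p\}$ is a single point, and by the very definition of a tube $\Omega$ is the set of points at distance at most $R$ from $p$, i.e. the geodesic ball $B(p,R)$ (smoothness of the tube forces $R$ to lie below the relevant injectivity/focal radius, so this ball is genuinely smooth). Conversely, every such geodesic ball is obviously an isoparametric tube whose soul is the center, so the proof is complete. Alternatively, one may argue through the boundary: $\bd\Omega=\Sigma_R$ is a compact embedded hypersurface of constant mean curvature, hence by the Alexandrov theorem and its extension to ${\bf H}^n$ and $\sphere n_+$ (\cite{MoRos}) each of its connected components is a geodesic sphere; since an isoparametric tube has at most two boundary components, and a disconnected boundary would force the soul to be a closed minimal hypersurface, excluded by the maximum-principle argument above with $k=n-1$, the boundary is a single geodesic sphere and $\Omega$ is the ball it bounds.

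There is no serious obstacle here: this is one of the more elementary statements of the paper, and its whole content is that the minimality of the soul is incompatible with positive dimension in these three spaces. The only points requiring a little care are the sign bookkeeping for $\mathrm{Hess}\,\varphi$ in each geometry --- using, in the hyperbolic case, the Hessian of the distance function, and in the spherical case the fact that ambient linear coordinates restrict to first eigenfunctions of the sphere --- together with the elementary topological remark needed to pass from ``$P$ is zero-dimensional'' to ``$P$ is a single point''.
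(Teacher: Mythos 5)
Your main argument is correct, but it takes a genuinely different route from the paper. The paper's proof never uses the soul: it observes that $\bd\Omega$ is a compact embedded hypersurface of constant mean curvature, so by the Alexandrov theorem and its extension to ${\bf H}^n$ and $\sphere n_+$ (\cite{MoRos}) each component is a geodesic sphere; two components would have to be parallel spheres of the same radius (both lie in the single CMC equidistant $\Sigma_R$), which is impossible in these spaces, so the boundary is one sphere and $\Omega$ is the ball it bounds. You instead lean on the quoted minimality of the soul (\cite{GeTa}) and reprove the classical fact that $\real n$, ${\bf H}^n$ and the open hemisphere contain no closed minimal submanifolds of positive dimension, via the test functions $\abs{x}^2$, $\cosh r$ and a linear height function whose Hessians are multiples of the metric; this forces $\dim P=0$, and your discussion of a two-point soul (disconnection, or a non-smooth equidistant at $s=d/2\le R$) finishes the argument. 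Your route buys independence from Alexandrov--Montiel--Ros and is essentially self-contained, at the price of invoking the minimality of the soul; the paper's route is shorter and needs no information about $P$ beyond the definition of an isoparametric tube. Two minor points: in the hemisphere case you should note explicitly that $P\subset\Omega$ lies in the open hemisphere, so $\varphi>0$ on $P$ (otherwise one only concludes $P$ lies in the equator, a case excluded anyway because the tube would then leave $\sphere n_+$); and in your ``alternative'' paragraph the assertion that a disconnected boundary forces the soul to be a closed minimal hypersurface is left unjustified (it requires connectedness of the unit normal bundle in codimension at least two), whereas the paper excludes two components by the equal-radius/parallel-spheres observation. Since your main argument does not rely on that paragraph, these are cosmetic remarks and the proof stands.
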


For the proof, just observe that the Alexandrov theorem holds true in any of these spaces, so that  any compact embedded hypersurface must be a geodesic sphere. If the boundary has two components, then by definition they must be spheres with the same (constant) mean curvature (hence the same radius) and must be parallel, which is impossible in the cases at hand.  Thus, the boundary consists of one piece, which is then a sphere.

 \nero Observe that a domain in a {\it whole} sphere $\sphere n$, bounded by two geodesic spheres, is an isoparametric tube if and only if the two boundary spheres are isometric and have equal (or antipodal) centers : in that case, the soul is an equatorial (i.e. totally geodesic) hypersurface. In fact, if the centers are neither equal nor antipodal then the region is not even a smooth tube. 

\nero More generally, any geodesic ball in a locally harmonic manifold is (more or less by definition) an isoparametric tube around its center.

\nero Now assume that the metric of $\Omega$ is smooth and that there is a distinguished point $p\in\Omega$ such that $(\Omega\setminus\{p\},g)$ is isometric to
$(0,R]\times\sphere{n-1}$ endowed with the metric $g=dr^2+\theta^{2}(r)g_{\sphere{n-1}}$ ($r$ being the radial parameter). Then we say that $\Omega$ is a {\it revolution manifold}. Clearly any revolution manifold  is an isoparametric tube around its soul, the point $p$. Note that its boundary $\bd\Omega$ has only one component, namely $\{R\}\times\sphere{n-1}$.  

\medskip

In the next subsection we will discuss the main class of examples of isoparametric tubes, namely, spherical domains bounded by isoparametric hypersurfaces. 

%%%%

\subsection{Isoparametric tubes in the standard sphere}\label{inthesphere} 

Usually, a hypersurface $\Sigma$ of a Riemannian manifold $M$ is called {\it isoparametric} if all nearby parallel hypersurfaces have constant mean curvature. 
Thus, the boundary of an isoparametric tube, and all of its regular equidistants, are isoparametric hypersurfaces of $M$, by definition. 

\smallskip

Now it is well-known that a hypersurface $\Sigma$ of a space form $\real n,{\bf H}^n$ or $\sphere n$, is {\it isoparametric} if and only if it has constant principal curvatures, that is, if and only if the characteristic polynomial of its second fundamental form is the same at all points of $\Sigma$. 

While on $\real n,{\bf H}^n$ and the hemisphere $\sphere n_+$ the only compact isoparametric hypersurfaces are the geodesic spheres, in  $\sphere n$ there is a much larger variety of them.  The classification of such hypersurfaces started from Cartan and was a major problem in Differential Geometry, which came to a complete solution only very recently (\cite{Ch}). Let us review the main properties of an isoparametric hypersurface $\Sigma$ of $\sphere n$.

\nero The number $g$ of distinct principal curvatures of $\Sigma$ can be only $1,2,3,4$ and $6$ (\cite{Mu1}).  The case $g=1$ corresponds to the family of geodesic spheres, and $g=2$ corresponds to Clifford tori; these are tubes around a totally geodesic submanifold and are hypersurfaces of type:
$$
\Sigma=\sphere p(a)\times\sphere q(b), \quad p+q=n-1, \quad a^2+b^2=1
$$
which admit a natural embedding into $\sphere{n}$ with constant principal curvatures $\lambda=\frac ba$ (counted $p$ times) and $\mu=-\frac ab$ (counted $q$ times).

It is a remarkable and perhaps surprising fact that when $g=4$ there exists non-homogenous isoparametric hypersurfaces. 

\nero For each $\Sigma$ there exist two regular, connected submanifolds $\Sigma_+, \Sigma_-$ of $\sphere n$ such that $\Sigma$ is the surface of the tube with radius $r_+$ (resp.  $r_-$) around $\Sigma_+$ (resp.  $\Sigma_-$). These submanifolds are called the {\it focal submanifolds} of $\Sigma$, and are minimal in $\sphere n$. 

\nero Every isoparametric hypersurface $\Sigma$ belongs to a one-parameter family, which gives rise to what is known to be an {\it isoparametric foliation} of $\sphere n$. This foliation has precisely two singular leaves (the focal submanifolds $\Sigma_+$ and $\Sigma_-$) and contains exactly one minimal isoparametric hypersurface: when $g=1$ it is the unique equator of the family (which is totally geodesic), and when $g=2$, for fixed $p$ and $q$, it is the minimal Clifford torus defined by the identity  $pb^2=qa^2$. 

\medskip

{\bf Geometric properties.} 
In what follows, $\Sigma$ is an isoparametric hypersurface of $\sphere n$.
We recall here the main geometric facts due to M\"unzner (see \cite{Mu1} and \cite{Mu2}); for further details we refer to the papers of Cecil  \cite{Cec} and Shklover (\cite{Shk}, page 17).
List the distinct principal curvatures of $\Sigma$ in decreasing order, as follows:
$$
k_1>k_2>\dots>k_g,
$$
so that (here and below) $g$ denotes the number of distinct principal curvatures. 
It turns out that $k_i=\cot\theta_i$ for a sequence $0<\theta_1<\dots<\theta_g<\pi$ such that
$$
\theta_j=\theta_1+\frac{j-1}{g}\pi.
$$
Let $m_j$ be the multiplicity of $k_j$. Then, we have a cyclic behavior: $m_{j+2}=m_j$ (modulo $g$) which implies that the sequence of multiplicities $m_1, m_2,\dots$ is determined by $m_1$ and $m_2$; in particular, $m_1=m_2=\dots=m_g$ whenever $g$ is odd.  Set
\begin{equation}\label{defc}
c=\frac 12(m_2-m_1)g^2.
\end{equation}
Note that $c=0$ if and only if all multiplicities are equal; this holds whenever $g$ is odd and also when $g=6$, by a result of M\"unzner. When $g=2$ one has $c=0$ if and only if $n$ is odd and $\Sigma$ is a Clifford torus
$\sphere p(a)\times\sphere p(b)$, that is, $p=q$.

\smallskip

It turns out that $\Sigma$ is always a (smooth) real algebraic variety: in fact, $\Sigma$ is a regular level set of the restriction to $\sphere{n}$ of a homogeneous polynomial $F:\real{n+1}\to\reals$ of degree $g$ which satisfies the conditions
\begin{equation}\label{cmp}
\twosystem
{\abs{\bar\nabla F}^2=g^2\abs{x}^{2g-2}}
{\bar\Delta F=c\abs{x}^{g-2}}
\end{equation}
where $c$ is as in \eqref{defc}  and $\bar\Delta, \bar\nabla$ are the Laplacian and the gradient in $\real{n+1}$. A polynomial $F$ with the properties \eqref{cmp} is an example of {\it Cartan-M\"unzner polynomial}. Conversely, any Cartan-M\"unzner polynomial of degree $g$ with a constant $c\ne \pm (n-1)g$ gives rise to an isoparametric foliation of $\sphere n$ with $g$ distinct principal curvatures. Having that, the geometric classification of isoparametric foliations reduces to the (difficult) algebraic problem of classifying 
all Cartan-M\"unzner polynomials.

\smallskip

We can now classify all isoparametric tubes in $\sphere n$ (for the proof of Theorem \ref{inthesphere} see the Appendix).

\begin{thm}\label{inthesphere} Let $\Omega$ be a domain in $\sphere n$. Then $\Omega$ is an isoparametric tube if and only if :

\smallskip

a) either $\Omega$ is the domain bounded by a connected isoparametric hypersurface, 

\smallskip

b) or $\Omega$ is a tube around a minimal isoparametric hypersurface $\Sigma$ such that all its distinct principal curvatures have the same multiplicity (that is, $\Sigma$ is minimal with $c=0$).
\end{thm} 

In the first case the boundary is connected and the soul is a focal submanifold of $\bd\Omega$; in the second case the soul is $\Sigma$ and the boundary consists of two parallel isoparametric hypersurfaces, which are at the same distance to  $\Sigma$ and have the same mean curvature (with respect to the inner normal vector). 

\smallskip

 For what we have just said we see that, in low dimensions:

\begin{cor}\label{lowd}

\parte a An isoparametric tube in $\sphere 2$  is either a geodesic disk or a tube around an equator.

\parte b An isoparametric tube in $\sphere 3$  is congruent to one of the following: a geodesic ball, a tube around the equator, a domain bounded by a Clifford torus or a tube around the minimal Clifford torus $\sphere 1(\frac{1}{\sqrt 2})\times \sphere 1(\frac{1}{\sqrt 2})$.
\end{cor}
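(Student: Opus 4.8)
The statement to prove is Corollary \ref{lowd}, which specializes Theorem \ref{wholesphere} to dimensions $2$ and $3$.

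The plan is to invoke Theorem \ref{wholesphere} and then run through the classification of isoparametric hypersurfaces in low-dimensional spheres, which by M\"unzner's constraint on the number $g$ of distinct principal curvatures becomes extremely restrictive. First I would recall that by Theorem \ref{wholesphere}, an isoparametric tube $\Omega \subset \sphere n$ is either the domain bounded by a connected isoparametric hypersurface $\bd\Omega$, or a tube around a minimal isoparametric hypersurface $\Sigma$. In either case the classification reduces to listing the isoparametric hypersurfaces of $\sphere 2$ and $\sphere 3$.

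For part a), $n=2$: a hypersurface of $\sphere 2$ is a curve, and ``constant principal curvatures'' just means constant geodesic curvature, so the only connected isoparametric hypersurfaces are the circles of constant geodesic curvature, i.e. the geodesic circles (metric circles) of $\sphere 2$, including the equator as the degenerate case of geodesic curvature zero. Thus $g$ can only be $1$ (a geodesic circle that is not an equator, whose two focal sets are the two poles --- the domain it bounds is a geodesic disk) or the minimal case where the isoparametric hypersurface is the equator itself, whose tubular neighborhood is a tube around the equator. No other cases occur. Hence an isoparametric tube in $\sphere 2$ is a geodesic disk or a tube around an equator.

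For part b), $n=3$: here $\sphere n = \sphere 3$ and an isoparametric hypersurface $\Sigma^2 \subset \sphere 3$ has $g \in \{1,2,3,6\}$ a priori; but the multiplicities $m_1,m_2$ of the principal curvatures sum to $\dim\Sigma = 2$ in the relevant pattern, and M\"unzner's relations on a surface force $g\le 3$, with $g=3$ requiring all multiplicities equal to $1$ --- this gives the Cartan example in $\sphere 3$ (a tube of constant radius over $\reals P^2$ via the Veronese embedding), but in fact in $\sphere 3$ the $g=3$ isoparametric family has leaves that are not embedded in a way producing a new tube distinct from the stated list --- more carefully, the $g=3$ minimal isoparametric surface in $\sphere 3$ is flat and the classical fact (Cartan) is that for $g=3$ all multiplicities are $1$ and there is a unique example in $\sphere 3$. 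I would note, however, that the corollary as stated omits it, so the intended reading is: in $\sphere 3$, $g=1$ gives geodesic spheres (the domain bounded is a geodesic disk, or the tube around an equatorial $\sphere 2$) and $g=2$ gives the Clifford tori $\sphere 1(a)\times\sphere 1(b)$ with $a^2+b^2=1$; by Theorem \ref{wholesphere} either $\Omega$ is bounded by a connected isoparametric hypersurface (a geodesic sphere, giving a geodesic disk; or a Clifford torus, giving a solid region bounded by a Clifford torus), or $\Omega$ is a tube around a minimal isoparametric hypersurface, which is either the totally geodesic $\sphere 2$ (tube around the equator) or the minimal Clifford torus $\sphere 1(\tfrac1{\sqrt2})\times\sphere 1(\tfrac1{\sqrt2})$ (determined by $pb^2=qa^2$ with $p=q=1$). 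This exhausts the list.

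The main obstacle is the correct bookkeeping in part b): one must be careful about which isoparametric hypersurfaces of $\sphere 3$ actually exist (the $g=3$ Cartan case in $\sphere 3$) and make sure the statement is consistent --- either by explaining why the $g=3$ case does not arise under the stated hypotheses (e.g. because its tube is not a ``tube'' in the sense of Definition \ref{iso} with a closed soul of the required type, or because it coincides after congruence with a listed example, or simply because the corollary restricts attention to $g\le 2$), and by correctly identifying the focal submanifolds so that ``domain bounded by'' versus ``tube around'' is assigned to the right object. Everything else is a direct quotation of Theorem \ref{wholesphere} together with the elementary classification of constant-principal-curvature hypersurfaces in $\sphere 2$ and $\sphere 3$.
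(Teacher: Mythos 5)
Your overall strategy is exactly the intended one: quote Theorem \ref{wholesphere} and then list the isoparametric hypersurfaces of $\sphere 2$ and $\sphere 3$. Part a) is correct as written. Part b), however, contains a factual error and leaves the key exclusion unresolved. You assert that $g=3$ ``gives the Cartan example in $\sphere 3$ (a tube of constant radius over $\reals P^2$ via the Veronese embedding)'' and then hedge about why the corollary omits it. That example does not live in $\sphere 3$: Cartan's $g=3$, multiplicity-one hypersurface is a tube over the Veronese embedding of $\reals P^2$ in $\sphere 4$, so it is a $3$-dimensional hypersurface and is irrelevant here. Since you explicitly list ``explaining why the $g=3$ case does not arise'' as an unresolved obstacle rather than settling it, part b) is incomplete as it stands.

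The correct (and very short) way to close the gap is a dimension count. A hypersurface of $\sphere 3$ is a surface, so its shape operator has only two eigenvalues at each point and hence $g\leq 2$; equivalently, by M\"unzner the dimension of an isoparametric hypersurface equals the sum of the multiplicities of its principal curvatures, which is at least $g$, so $g=3,4,6$ are impossible when $\dim\Sigma=2$. With $g\leq 2$ the only isoparametric hypersurfaces of $\sphere 3$ are geodesic spheres ($g=1$) and Clifford tori $\sphere 1(a)\times\sphere 1(b)$, $a^2+b^2=1$ ($g=2$), whose minimal representatives are the equatorial $\sphere 2$ and the torus with $a=b=\frac{1}{\sqrt 2}$; feeding this into the two alternatives of Theorem \ref{wholesphere} yields exactly the four domains in the statement. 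With that substitution your argument is complete and coincides with the paper's.
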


%%%%

\subsection{Exit time moments}\label{exittimem} The function $E_1:\Omega\to\reals$, unique solution of the Dirichlet problem
$$
\twosystem
{\Delta E_1=1\qtq \Omega}
{E_1=0\qtq \bd\Omega}
$$
is known in the literature as the {\it torsion function} of the domain $\Omega$, and its integral
$$
T_1(\Omega)\doteq\int_{\Omega}E_1\,dv_g
$$
defines the so-called {\it torsional rigidity} of $\Omega$. Isoperimetric inequalities for the torsion function and the torsional rigidity are by now classical (see for example \cite{Ba}, \cite{PoSz}). 
It is also well-known (but see below) that a domain is critical for torsional rigidity (under volume preserving deformations) if and only if its torsion function has constant normal derivative, that is, if and only if  $\Omega$ supports a solution to the overdetermined problem
\begin{equation}\label{serrin}
\twosystem
{\Delta u=1\qtq\Omega}
{u=0, \quad\derive uN=c \qtq\bd\Omega}
\end{equation}
known in the literature as {\it Serrin problem}. As a consequence of Serrin's rigidity theorem (see \cite{Ser}) we know that the only domains in $\real n$ which are critical for $T_1$ under volume preserving deformations are balls, and these are all maxima by the classical isoperimetric result by Polya (\cite{Po}, \cite{PoSz}). The same rigidity holds in the spaces ${\bf H}^n$ and $\sphere n_+$: the only critical domains are geodesic balls (also in this case, these are absolute maxima, see for example \cite{BS} and \cite{CGL}). 

On the sphere there are many critical domains for torsional rigidity which are not geodesic balls, for example, domains bounded by any connected isoparametric hypersurface, as proved in \cite{Shk} (see also \cite{S1}).  Yet more generally, isoparametric tubes in general Riemannian manifolds  are critical for $T_1$ (\cite{S2}).
However, the condition of being critical for $T_1$ seems to be too weak to guarantee a reasonable classification: see a recent example in \cite{FMW} of a spherical domain which is critical for $T_1$ and is not even an isoparametric tube (actually, the boundary has variable  mean curvature). 
We will prove in this paper a classification result under criticality for the whole family of exit time moments, which we are going to define.  

\smallskip

Now, the function $E_1$ has also a probabilistic intepretation, being the {\it mean exit time} associated to the Brownian motion of $\Omega$. As such, it is part of a hierarchy of exit time moments.Precisely, define the {\it $k$-th exit time function} $E_k$ on $\Omega$ inductively as follows. We set $E_0=1$ and, for $k\geq 1$, we let $E_k$ be the unique solution of
\begin{equation}\label{esubk}
\twosystem
{\Delta E_k=kE_{k-1}}
{E_k=0\quad\text{on}\quad\bd\Omega}
\end{equation}
Note that $E_1$ is just the torsion function. 
The {\it $k$-th exit time moment} of the bounded domain $\Omega$ is now defined as
\begin{equation}\label{tsubk}
T_k(\Omega)\doteq\int_{\Omega}E_k dv.
\end{equation}
The sequence
$$
m^{\star}(\Omega)=\{T_1(\Omega), T_2(\Omega), \dots\}
$$
is known as the {\it exit time moment spectrum} of $\Omega$. 
These invariants have been studied in the papers \cite{CLmcD}, \cite{DLmcD},
\cite{HMP1},\cite{HMP2},  \cite{mcD}, \cite{mcDM}.

\smallskip

The following expressions hold, for $k\geq 1$ (see for example \cite{CLmcD}):
\begin{equation}\label{expression}
E_k(x)=k\int_0^{\infty}t^{k-1}u(t,x)\,dt
\end{equation}
therefore
\begin{equation}\label{expressiontk}
T_k(\Omega)=k\int_0^{\infty}t^{k-1}H_{\Omega}(t)\,dt,
\end{equation}
where $H_{\Omega}(t)$ is the heat content of $\Omega$. These identities show the strict relation between the exit time moments and the heat content. 
The relation between the moment spectrum and the Dirichlet spectrum has been studied in \cite{DLmcD}, \cite{CLmcD} and \cite{HMP2}.

In this paper we characterize the geometry of Riemannian domains which are critical for all exit time moments $T_k(\Omega)$, see Theorem \ref{mainbis} below.

%%%

\subsection{Domain deformations and critical domains}\label{deformations} 
Let $V$ be a smooth vector field defined in a neighborhood $U$ of the domain $\Omega$ in $M$. Define a one-parameter domain deformation 
$
f_{\eps}:\Omega\to M
$
by:
\begin{equation}\label{deformation}
f_{\eps}(x)=\exp_x(\eps V(x)).
\end{equation}
For $\eps$ small enough, $f_{\eps}$ restricts to a diffeomorphism:
$$
f_{\eps}:\Omega\to f_{\eps}(\Omega)\doteq\Omega_{\eps}.
$$
We call $\Omega_{\eps}$ the {\it one-parameter deformation of $\Omega$ associated to the vector field $V$}.
%Note the  isometry 
%$$
%f_{\eps}:(\Omega, f_{\eps}^{\star}g)\to (\Omega_{\eps},g).
%$$
%Setting
%$g_{\epsilon}=f_{\eps}^{\star}g$
%we will identify the deformed domain $\Omega_{\eps}$ endowed with the ambient metric $g$ with the original domain $\Omega$ endowed with  the deformed metric $g_{\eps}$. In other words we keep the domain fixed and deform the metric. This has several advantages. 

\smallskip

Given a geometric functional $\mathcal F=\mathcal F(\Omega)$ depending smoothly on the domain $\Omega$ we define its first variation $\mathcal F'(\Omega)$ as follows:
$$
\mathcal F'(\Omega)=\dfrac{d}{d\eps}|_{\eps=0}\mathcal F(\Omega_{\eps}).
$$
Note that $\mathcal F'(\Omega)$ depends on the vector field $V$ which defines the deformation, hence it would be more correct to write
$
\mathcal F'(\Omega)=D\mathcal F(\Omega,V),
$
interpreting such expression as the directional derivative of the functional $\mathcal F$ at $\Omega$ in the direction $V$. 
We will say that $\Omega$ is {\it critical for the functional $\mathcal F$} if 
$$
D\mathcal F(\Omega,V)=0
$$
for all deformations of $\Omega$ hence, for all vector fields $V$. However, to have a meaningful geometric problem one should  (and we will) impose  that the deformation $f_{\epsilon}$ is {\it volume preserving}:
$$
\abs{\Omega_{\eps}}=\abs{\Omega}
$$ 
for $\epsilon$ small enough.  To preserve volume, the vector field $V$ must satisfy the condition $
\int_{\Omega}{\rm div}V=0
$
and then, by Green formula:
\begin{equation}\label{vp}
\int_{\bd\Omega}\scal{V}{N}=0.
\end{equation}
Under condition \eqref{vp} we can study the first variation of the heat content when the deformed domains have the same volume.

%%%

\subsection{The main result} Here is the main result of this paper, as stated at the beginning of the introduction. The word {\it critical} means really {\it critical under volume preserving deformations}.

\begin{thm}\label{mainbis} Let $\Omega$ be a smooth bounded domain in a complete Riemannian manifold $(M,g)$. The following are equivalent:

\smallskip

\item a) $\Omega$ is critical for the heat content $H_{\Omega}(t)$ at every fixed time $t>0$.

\item b) $\Omega$ is critical for the $k$-th exit time moment $T_k(\Omega)$, for all $k\geq 1$.

\item c) $\Omega$ has the constant flow property.

\smallskip

If the ambient Riemannian manifold $M$ is (real) analytic, then a), b) and c) are in turn all equivalent to:

\smallskip

\item d) $\Omega$ is an isoparametric tube over a closed, minimal submanifold of $M$.
\end{thm}

%%%

\subsection{Remarks} 

We first recall a comparison result proved by Burchard and Shmuckenschlager \cite{BS}:  let $\Omega$ be a domain in a constant curvature space form (hence, up to homotheties, $\real n, {\bf H}^n$ and $\sphere n$)  and let $\Omega^{\star}$ be the geodesic ball with the same volume: $\abs{\Omega^{\star}}=\abs{\Omega}$.  Then, at all times $t>0$ one has
$$
H_{\Omega}(t)\leq H_{\Omega^{\star}}(t).
$$
Therefore geodesic balls in constant curvature space forms are absolute maxima for the heat content functional, if one restricts to deformations which preserve the inner volume. 

\smallskip

We have seen in Theorem \ref{spaceform} that the only isoparametric tubes in  $\real n, {\bf H}^n$ and $\sphere n_+$ are geodesic balls; hence, by our main theorem, we have:

\begin{cor} The only bounded domains in $\real n, {\bf H}^n$ and $\sphere n_+$ which are critical for the heat content are geodesic balls (these are absolute maxima by \cite{BS}).
\end{cor}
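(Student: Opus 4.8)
The statement follows almost immediately from the results already assembled, so the "proof" is really a matter of chaining together the right references. First I would invoke Theorem~\ref{mainbis}: a bounded domain $\Omega$ in a complete Riemannian manifold is critical for the heat content $H_\Omega(t)$ at every fixed $t>0$ if and only if (taking $M=\real n,{\bf H}^n$ or $\sphere n_+$, which are analytic) $\Omega$ is an isoparametric tube over a closed, minimal submanifold. So the classification of critical domains reduces to the classification of isoparametric tubes in these three spaces.

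Next I would apply Proposition~\ref{spaceform}, which asserts that the only compact isoparametric tubes in $\real n$, ${\bf H}^n$ and $\sphere n_+$ are the geodesic balls; the soul is then necessarily a point. Combining this with the equivalence a)$\Leftrightarrow$d) of Theorem~\ref{mainbis} yields that the only critical domains for the heat content in these spaces are geodesic balls. This is the first assertion of the Corollary.

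For the second assertion — that these balls are \emph{absolute} maxima under volume-preserving deformations — I would quote the Burchard--Shmuckenschlager comparison result recalled just above the Corollary: for any domain $\Omega$ in a constant curvature space form, if $\Omega^\star$ denotes the geodesic ball with $\abs{\Omega^\star}=\abs{\Omega}$, then $H_\Omega(t)\le H_{\Omega^\star}(t)$ for all $t>0$. Since any volume-preserving deformation of a geodesic ball $B$ produces domains $\Omega_\eps$ with $\abs{\Omega_\eps}=\abs{B}$, the comparison gives $H_{\Omega_\eps}(t)\le H_B(t)$ for every $t>0$, so $B$ is an absolute maximum of the heat content functional at each time, restricted to such deformations.

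There is essentially no obstacle here: the only point requiring a word of care is that Theorem~\ref{mainbis}(d) is stated for \emph{analytic} ambient manifolds, which is exactly why the Corollary is confined to $\real n$, ${\bf H}^n$ and $\sphere n_+$ (all analytic); and that the Burchard--Shmuckenschlager inequality applies to the \emph{whole} constant curvature space forms $\real n,{\bf H}^n,\sphere n$, so in particular to $\sphere n_+\subset\sphere n$, giving the maximality claim in that case as well. No further computation is needed.
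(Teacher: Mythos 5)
Your argument is correct and is essentially the paper's own: Theorem \ref{mainbis} (a)$\Leftrightarrow$(d) in the analytic setting, combined with Proposition \ref{spaceform} classifying isoparametric tubes in $\real n$, ${\bf H}^n$, $\sphere n_+$ as geodesic balls, and the Burchard--Schmuckenschlager comparison for the maximality claim. No issues to report.
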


The same conclusion holds for the exit time moments; this was first proved in \cite{mcD} by the Alexandrov-Serrin moving plane argument. We remark that the moving plane method cannot be applied in our general case, and in fact
the situation changes drastically already in the whole sphere, as there are many critical domains which are not geodesic balls. This follows from part d) of the main theorem and the classification of the isoparametric tubes 
given in Theorem \ref{inthesphere} of the previous section:

\begin{cor} A domain $\Omega$ in $\sphere n$ is critical for the heat content at all times $t$ and for the $k$-th exit time moment $T_k$, for all $k$,  if and only if: $\Omega$ is bounded by a (connected) isoparametric hypersurface or is a tube around a minimal isoparametric hypersurface having $c=0$.
\end{cor}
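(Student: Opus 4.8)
The plan is to read this corollary off from the main theorem together with the classical structure of isoparametric foliations of the round sphere. Since $\sphere n$ is analytic, Theorem \ref{mainbis} tells us that the variational conditions appearing in the statement — criticality for the heat content at every $t>0$ and criticality for every exit time moment $T_k$ — are each equivalent to $\Omega$ being an isoparametric tube over a closed minimal submanifold of $\sphere n$. So the corollary reduces to the purely geometric assertion that \emph{the isoparametric tubes of $\sphere n$ are exactly the domains bounded by a connected isoparametric hypersurface, together with the tubes around a minimal isoparametric hypersurface}. One inclusion of this is precisely Theorem \ref{wholesphere}; the remaining work is the reverse inclusion, namely that each of these two families genuinely consists of isoparametric tubes.

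For the forward direction, assume $\Omega$ is critical (for the heat content at all $t$, equivalently for all $T_k$). By Theorem \ref{mainbis}(d) it is an isoparametric tube over a closed minimal submanifold $P\subset\sphere n$, and Theorem \ref{wholesphere} then gives the dichotomy: either $\Omega$ is the domain bounded by a connected isoparametric hypersurface of $\sphere n$ (with $P$ a focal submanifold of $\bd\Omega$), or $\Omega$ is a tube around a minimal isoparametric hypersurface $\Sigma$ (with $P=\Sigma$). This is exactly the alternative claimed.

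For the converse it suffices to check that each listed family consists of isoparametric tubes, since Theorem \ref{mainbis} then upgrades this to criticality for the heat content at all $t$ and simultaneously for all $T_k$. If $\bd\Omega=\Sigma$ is a connected isoparametric hypersurface, then by M\"unzner's theory (Section \ref{inthesphere}) $\Sigma=f^{-1}(t)$ is a regular level set of a Cartan--M\"unzner polynomial, it separates $\sphere n$ into two closed regions, each of which is a smooth tube around one of the focal submanifolds $\Sigma_\pm=f^{-1}(\pm 1)$, and the equidistants from a focal submanifold are the regular leaves $f^{-1}(s)$ of the isoparametric foliation, which have constant mean curvature; hence $\Omega$ is an isoparametric tube in the sense of Definition \ref{iso}. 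If instead $\Omega$ is the tube of radius $r$ about a minimal isoparametric hypersurface $\Sigma$, then $\bd\Omega$ consists of the two parallel leaves obtained by flowing distance $r$ off the two sides of $\Sigma$; because $\Sigma$ is the minimal leaf of its family, these two leaves have the same constant mean curvature, and likewise every equidistant of $\Omega$ from $\Sigma$ is a constant mean curvature leaf of the foliation, so $\Omega$ is again an isoparametric tube.

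I do not expect a genuine obstacle here: once Theorem \ref{mainbis} is available, the corollary is essentially a bookkeeping combination of it with Theorem \ref{wholesphere} and the properties of isoparametric foliations recalled in Section \ref{inthesphere}. The only point deserving care is the converse for the second family — that the equidistants of $\sphere n$ from a minimal isoparametric hypersurface have constant mean curvature — which relies on the symmetry of the isoparametric foliation about its minimal leaf; the explicit low-dimensional instances ($\sphere 2$ and $\sphere 3$) are recorded in Corollary \ref{lowd}.
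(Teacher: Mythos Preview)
Your argument is correct and follows precisely the paper's approach: the paper derives the corollary in one sentence from part (d) of Theorem \ref{mainbis} together with the classification of spherical isoparametric tubes in Theorem \ref{wholesphere}, and you spell out both implications, additionally verifying the converse inclusion (that each listed family consists of isoparametric tubes) which the paper leaves implicit in the discussion of Section \ref{inthesphere}.
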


For example, in $\sphere 3$, the critical domains are: geodesic balls, tubes around an equator,  domains bounded by a Clifford torus and tubes around a minimal Clifford torus. The classification exists in higher dimension, but it gets more complicated due to the large variety of isoparametric foliations in higher dimensions.

\nero As a final remark, we ask the following question: is it possible to weaken the assumption {\it "$\Omega$ is critical for all $T_k$"} in the statement of the main theorem ? For example, if we assume criticality for only one $k$ can we get the required rigidity? Well, the answer is negative, at least if $k=1$:

\begin{rem} There exist (analytic) Riemannian domains which are critical for torsional rigidity $T_1$ but are not isoparametric tubes. 
\end{rem}
A first such example was constructed in \cite{S2}, and consists of any minimal free boundary immersion in the $3$-dimensional unit Euclidean ball $B_3$ having more than two boundary components: the normal derivative of the torsion function $E_1$ is constant on the boundary, but $\Omega$ cannot be a smooth tube.
Other "exotic" examples exist even in the round sphere; in \cite{FMW} one can find domains which are critical for torsional rigidity but have boundary with non-constant mean curvature. These domains are in fact suitable perturbations of a tubular neighborhood of the equator.

%%%

\subsection{First variation of the heat content}\label{firstvariation} The main step in the proof of Theorem \ref{main} is the formula for the first variation of the heat content, which is the following.

\begin{theorem}\label{vformula} For a fixed value of time $t>0$, let $\mathcal F_t(\Omega)=H_{\Omega}(t)$ be the heat content of $\Omega$ at time $t$. Then its first variation in the direction $V$ is given by:
$$
D\mathcal F_t(\Omega,V)=-\int_0^t\int_{\bd\Omega}\scal{V}{N}\derive{u}{N}(\tau,y)\derive{u}{N}(t-\tau,y)\,dv(y)\,d\tau
$$
where $N$ is the inner unit normal and $u(t,x)$ is the temperature function defined in \eqref{temp}.
\end{theorem}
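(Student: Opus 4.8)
The plan is to compute $\depszero H_{\Omega_\eps}(t)$ by differentiating under the integral sign, which forces us to track how both the domain of integration and the integrand (the temperature function, which depends on $\Omega_\eps$) vary. The standard way to organize this is the \emph{Hadamard variational formula} approach: pull back everything to the fixed domain $\Omega$ via the diffeomorphism $f_\eps$, so that $H_{\Omega_\eps}(t) = \int_\Omega u_\eps(t, f_\eps(x))\, J_\eps(x)\, dv(x)$ where $u_\eps$ is the temperature of $\Omega_\eps$ and $J_\eps$ is the Jacobian of $f_\eps$. Differentiating at $\eps = 0$ produces two terms: one from the variation of the Jacobian (which, using $\depszero J_\eps = \operatorname{div} V$ and the volume-preserving condition \eqref{vp}, will contribute a boundary term), and one from the ``shape derivative'' $\dot u(t,x) \doteq \depszero u_\eps(t,x)$ of the temperature at a fixed interior point. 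The key is to identify $\dot u$.

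First I would determine the PDE satisfied by $\dot u$. Differentiating the heat equation \eqref{temp} in $\eps$: since $\Delta u_\eps + \partial_t u_\eps = 0$ on $(0,\infty)\times\Omega_\eps$ and the initial condition $u_\eps(0,\cdot)=1$ is $\eps$-independent, $\dot u$ satisfies the homogeneous heat equation on $\Omega$ with zero initial data. The nontrivial input is the boundary condition: since $u_\eps$ vanishes on $\bd\Omega_\eps = f_\eps(\bd\Omega)$, differentiating the identity $u_\eps(t, f_\eps(y)) = 0$ for $y \in \bd\Omega$ gives $\dot u(t,y) + \scal{\nabla u}{V}(t,y) = 0$, i.e. $\dot u(t,y) = -\scal{\nabla u}{V} = \scal{V}{N}\,\derive uN(t,y)$ on $\bd\Omega$ (using that $\nabla u$ is normal on $\bd\Omega$, pointing opposite to the inner normal $N$ since $u$ decreases from $1$ to $0$). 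So $\dot u$ solves a heat equation with zero initial condition and prescribed inhomogeneous Dirichlet data $g(t,y) = \scal{V}{N}\derive uN(t,y)$.

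Next I would express $\int_\Omega \dot u(t,x)\,dv(x)$ via a \emph{duality / Duhamel} argument. The natural dual object is $u$ itself (or rather $u(t-\tau,\cdot)$), because $\int_\Omega u(t,x)\,dv$ is exactly the heat content whose variation we want, and $u$ solves the \emph{same} heat operator with the initial data $=1$ that appears after integrating $\dot u$ against it. Concretely, consider $\Phi(\tau) = \int_\Omega \dot u(\tau, x)\, u(t-\tau, x)\, dv(x)$ for $\tau \in [0,t]$. Differentiating in $\tau$, using $\partial_\tau \dot u = -\Delta\dot u$ and $\partial_\tau u(t-\tau,\cdot) = \Delta u(t-\tau,\cdot)$, the interior terms give $\int_\Omega(-\Delta\dot u\cdot u + \dot u\cdot\Delta u)\,dv$, which by Green's formula equals a pure boundary integral; since $\dot u = g(\tau,\cdot)$ on $\bd\Omega$ and $u(t-\tau,\cdot) = 0$ on $\bd\Omega$, only the term $-\int_{\bd\Omega} g(\tau,y)\,\derive{u}{N}(t-\tau,y)\,d\sigma(y)$ survives (with appropriate sign from the normal convention). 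Integrating $\Phi'$ from $0$ to $t$ and noting $\Phi(0) = 0$ (zero initial data for $\dot u$) while $\Phi(t) = \int_\Omega \dot u(t,x)\cdot 1\, dv = \int_\Omega \dot u(t,x)\,dv$, I get $\int_\Omega \dot u(t,\cdot)\,dv = -\int_0^t\int_{\bd\Omega}\scal VN\,\derive uN(\tau,y)\,\derive uN(t-\tau,y)\,d\sigma(y)\,d\tau$. Finally I would check that the Jacobian term $\int_\Omega u(t,x)\operatorname{div}V\,dv = \int_{\bd\Omega}\scal VN\, u(t,y)\,d\sigma - \int_\Omega \scal{\nabla u}{V}\,dv$ handled correctly against the pulled-back gradient term cancels, or more cleanly, that the combined shape-plus-transport derivative of $\int u\, dv$ reduces to $\int_\Omega \dot u\,dv$ above; the boundary piece $\int_{\bd\Omega}\scal VN\, u(t,y)\,d\sigma$ vanishes because $u(t,\cdot) = 0$ on $\bd\Omega$.

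The main obstacle I anticipate is \emph{justifying the differentiations rigorously}: proving that $\eps\mapsto u_\eps$ is differentiable in a suitable function space (say $C^0([0,t]\times\bar\Omega)$ with enough spatial regularity to make sense of $\derive uN$ and apply Green's formula), that $\dot u$ exists and solves the claimed boundary-value problem, and that $\Phi(\tau)$ is differentiable with the integrations by parts valid up to $\tau = 0$ and $\tau = t$ despite the fact that $u(0,\cdot)=1$ does not vanish on $\bd\Omega$ (so there is a boundary-layer singularity in $\derive uN$ as $\tau\to 0^+$, behaving like $\tau^{-1/2}$, which is however integrable — this is precisely why the $d\tau$ integral converges). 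I would address the regularity by standard parabolic theory (e.g. analytic/smooth dependence on parameters for the heat semigroup, or an explicit Duhamel representation of $u_\eps$), and handle the endpoint integrability by the known small-time asymptotics of the heat flow.
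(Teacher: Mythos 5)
Your proposal is correct and reaches the stated formula, but by a genuinely different route from the paper. You keep the metric fixed and differentiate the moving-domain problem, so the perturbation enters through the shape derivative $\dot u$, which solves the homogeneous heat equation with zero initial data and inhomogeneous Dirichlet data on $\bd\Omega$; the transposition argument with $\Phi(\tau)=\int_\Omega \dot u(\tau,\cdot)\,u(t-\tau,\cdot)\,dv$ then yields the boundary convolution directly, with no need to compute the variation of any differential operator. The paper instead pulls the metric back ($g_{\eps}=f_{\eps}^{\star}g$), so the boundary condition stays homogeneous and the perturbation appears as an interior source $-\Delta' v$; the price is having to compute $\Delta'$ (Berger's formula) and to convert the pairing $\int_\Omega(f\Delta' h+h\Delta' f)\,dv$ into boundary terms via the Rellich-type identity of Lemma \ref{rellich}, while the payoff is that differentiability in $\eps$ can be quoted from \cite{RS} (smooth dependence of the Dirichlet heat kernel on the metric) and the small-time singularity is tamed by the explicit regularization $v(t,x)=u(t+\delta,x)$. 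In your route the real work sits exactly where you flag it: existence and boundary regularity of $\dot u$, whose Dirichlet data blows up like $\tau^{-1/2}$ as $\tau\to 0^{+}$, and the justification of $\Phi(0)=0$, $\Phi(t)=\int_\Omega\dot u(t,\cdot)\,dv$ and of the integrations by parts near both endpoints $\tau=0$ and $\tau=t$; ``standard parabolic theory'' would have to be spelled out at the level of care the paper achieves with the $\delta$-shift. Two smaller corrections: with the paper's conventions ($N$ inner, so $\derive uN>0$ on $\bd\Omega$), $\nabla u$ points along $N$, not opposite to it, so the boundary value of the shape derivative is $-\scal{V}{N}\derive uN$ rather than $+\scal{V}{N}\derive uN$; your compensating sign in the Green-formula step makes the final formula correct, but the intermediate statement should be fixed. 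Also, the variation formula itself requires no volume-preserving hypothesis (neither here nor in the paper): the transport/Jacobian contribution reduces to $\int_{\bd\Omega}\scal{V}{N}u(t,\cdot)\,d\sigma=0$ because $u(t,\cdot)$ vanishes on $\bd\Omega$, and \eqref{vp} only enters later when characterizing critical domains.
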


Note that, as $t\to 0$, one has, for all $y\in\bd\Omega$ (see \cite{S3}):
$$
\derive{u}{N}(t,y)=\dfrac{1}{\sqrt\pi}\dfrac{1}{\sqrt t}+O(1)
$$
which guarantees that the integral on the right hand side is convergent. We remark that Ozawa obtained in \cite{Oza} the first variation of the Dirichlet heat kernel of Euclidean domains. Our methods are different and are based on the fact that the heat kernel depends smoothly on the deformation parameter (see \cite{RS}). 

%%%

%%%%

\subsection{Scheme of proof} The rest of the paper is organized as follows.

In Section 2 we prove the first variation formula, Theorem \ref{vformula}. In Section 3 we prove 
 the equivalence between a) and c) of the main theorem:
 
\begin{thm}\label{equivone} $\Omega$ is critical for the functional $\mathcal F_t(\Omega)$ given by the heat content at time $t$, for all $t>0$, if and only if $\Omega$ has the constant flow property.
\end{thm}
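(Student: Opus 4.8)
The plan is to deduce Theorem \ref{equivone} directly from the first variation formula of Theorem \ref{vformula}, treating the two implications separately. Fix $t>0$ and write, for $y\in\bd\Omega$, $\phi_t(y)=\derive{u}{N}(t,y)$ for the pointwise heat flow; the variation formula reads
$$
D\mathcal F_t(\Omega,V)=-\int_0^t\int_{\bd\Omega}\scal{V}{N}\,\phi_\tau(y)\,\phi_{t-\tau}(y)\,dv(y)\,d\tau.
$$
For the easy direction, suppose $\Omega$ has the constant flow property, so $\phi_\tau(y)=c(\tau)$ is independent of $y$. Then the integrand factors and
$$
D\mathcal F_t(\Omega,V)=-\Big(\int_0^t c(\tau)c(t-\tau)\,d\tau\Big)\int_{\bd\Omega}\scal{V}{N}\,dv(y),
$$
and the boundary integral vanishes for every volume-preserving deformation by \eqref{vp}. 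Hence $\Omega$ is critical for $\mathcal F_t$, and since $t>0$ was arbitrary, for all $t$.

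For the converse, assume $\Omega$ is critical for $\mathcal F_t$ for every $t>0$. Criticality means $D\mathcal F_t(\Omega,V)=0$ for all $V$ satisfying \eqref{vp}; equivalently, the function $\psi_t:\bd\Omega\to\reals$ defined by $\psi_t(y)=\int_0^t \phi_\tau(y)\phi_{t-\tau}(y)\,d\tau$ is $L^2(\bd\Omega)$-orthogonal to every mean-zero function on $\bd\Omega$, since the normal components $\scal{V}{N}$ range over all smooth mean-zero functions as $V$ varies. Therefore $\psi_t$ is constant on each connected component of $\bd\Omega$ — and in fact constant on all of $\bd\Omega$, because the mean-zero constraint is global: choosing $V$ with $\scal{V}{N}$ supported near two different components with opposite integrals shows $\psi_t$ takes the same value on every component. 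So for each $t>0$ there is a constant $\gamma(t)$ with
$$
\int_0^t \phi_\tau(y)\,\phi_{t-\tau}(y)\,d\tau=\gamma(t)\qquad\text{for all }y\in\bd\Omega.
$$
The main obstacle is to pass from this family of \emph{convolution} identities (the left side is $(\phi\ast\phi)(t)$ as a function of $t$, for each fixed $y$) to the pointwise statement that $\phi_t(y)$ itself is independent of $y$. I would do this via the Laplace transform: set $g_y(s)=\int_0^\infty e^{-st}\phi_t(y)\,dt$, which is well-defined and analytic for $\mathrm{Re}\,s>0$ because of the bound $\phi_t(y)=\frac{1}{\sqrt{\pi t}}+O(1)$ as $t\to0$ (quoted after Theorem \ref{vformula}) together with the exponential decay of $\derive{u}{N}$ as $t\to\infty$ coming from \eqref{largetimes}. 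Taking Laplace transforms of the convolution identity turns it into $g_y(s)^2=\hat\gamma(s)$, independent of $y$; hence $g_y(s)=\pm\sqrt{\hat\gamma(s)}$, and by continuity in $y$ (and connectedness of each boundary component, or by a sign-consistency argument using that $\phi_t(y)>0$ for small $t$ on all of $\bd\Omega$) the sign is the same for all $y$, so $g_y(s)$ is independent of $y$. Inverting the Laplace transform, which is injective on this class of functions, gives $\phi_t(y)=c(t)$ for all $y\in\bd\Omega$ and all $t>0$, i.e. the constant flow property. One should check smoothness of $t\mapsto c(t)$, which follows since $c(t)$ equals $\derive uN(t,y_0)$ for any fixed $y_0$ and $u$ is smooth in $t$.

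I expect the Laplace-transform step — in particular justifying that $\hat\gamma$ has a well-defined analytic square root whose inverse transform is the genuine heat flow, and ruling out a sign flip between boundary components — to be the delicate part; an alternative, perhaps cleaner, route is to differentiate the convolution identity in $t$ repeatedly at small $t$ and use the small-time asymptotics of $\phi_t(y)$ to match coefficients, but the Laplace-transform argument seems the most transparent. Everything else is a direct unwinding of Theorem \ref{vformula} and the definition of volume-preserving criticality.
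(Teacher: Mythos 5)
Your argument is correct and follows essentially the same route as the paper: the easy direction by factoring the variation formula against the volume-preserving constraint \eqref{vp}, and the converse by observing that criticality forces $B(t,y)=\int_0^t\derive uN(\tau,y)\derive uN(t-\tau,y)\,d\tau$ to be independent of $y$, then taking the Laplace transform of this convolution identity, extracting the square root, and invoking injectivity of the Laplace transform. The sign ambiguity you flag as delicate is resolved exactly as you suggest (the heat flow is nonnegative since $u>0$ in $\Omega$, $u=0$ on $\bd\Omega$ and $N$ is the inner normal, so its Laplace transform is the nonnegative square root), which is how the paper implicitly handles it.
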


In Section 3 we show the equivalence between b) and c):

\begin{theorem}\label{etm} The domain $\Omega$ is critical for the $k$-th exit time moment, for all $k\geq 1$ and for all volume preserving deformations, if and only if $\Omega$ has the constant flow property.
\end{theorem}

Having done that, we finish the proof of the main theorem by recalling that,
when $(M,g)$ is analytic, the equivalence between c) and d) has been proved in Theorem 7 of \cite{S2}. With this in mind, the proof of Theorem \ref{mainbis} is  complete.

%%%%

\section{Proof of the first variation formula}

 Let $\Omega_{\eps}=f_{\eps}(\Omega)$ be a smooth deformation of $\Omega$ associated to the vector field $V$, as in \eqref{deformation}.  We adopt the following point of view: deforming the domain in a manifold with a fixed metric is equivalent to keeping the domain fixed and deforming the metric. In more precise terms,  we identify $(\Omega_{\eps},g)$ with $(\Omega,f_{\eps}^{\star}g)$ and denote $g_{\eps}\doteq f_{\eps}^{\star}g$. We let $\Delta_{\eps}$ be the Laplacian associated to the metric $g_{\eps}$ of $\Omega$, and $dv_{\eps}$ the corresponding Riemannian measure.  We denote:
$$
g'\doteq\dfrac{d}{d\eps}|_{\eps=0}g_{\eps},\quad \Delta'\doteq\dfrac{d}{d\eps}|_{\eps=0}\Delta_{\eps}, \quad dv'=\dfrac{d}{d\eps}|_{\eps=0}dv_{\eps}.
$$
It is a standard fact that:
\begin{equation}\label{standard}
dv'={\rm div}V\,dv=-\delta V\,dv,
\end{equation}
where $dv=dv_g$ is the Riemannian measure associated to the original metric $g$ and $\delta$ is the adjoint of the gradient operator.  We point out that the operator $\Delta'$ has been computed by Berger in \cite{B}; the explicit expressions of $g'$ and $\Delta'$  will be given in Lemma \ref{prime} of Appendix 1. 

Let us denote $u_{\eps}(t,x)$ the temperature function in the deformed metric $g_{\eps}$. It is the unique solution of 
$$
\threesystem
{\Delta_{\eps} u_{\eps}+\derive{u_{\eps}}{t}=0\quad\text{on}\quad (0,\infty)\times\Omega}
{u_{\eps}(0,x)=1\quad\text{for all }\quad x\in\Omega}
{u_{\eps}(t,y)=0\quad\text{for all }\quad y\in\bd\Omega, \quad t>0}
$$
and one has 
\begin{equation}\label{hk}
u_{\eps}(t,x)=\int_{\Omega}k_{\eps}(t,x,y)dv_{\eps}(y)\quad\text{and of course}\quad u(t,x)=\int_{\Omega}k(t,x,y)dv(y)
\end{equation}
where $k_{\eps}(t,x,y)$ is the Dirichlet heat kernel of $(\Omega,g_{\eps})$ (resp. the Dirichlet heat kernel of $(\Omega,g)$).
From Proposition 6.1 in \cite{RS}, we know that the map $\eps\mapsto k_{\eps}$ is differentiable \footnote{Looking at the proof, we see that when $k_{\epsilon}$ is the Dirichlet heat kernel on functions (i.e. forms of degree zero) the assumption made in \cite{RS} that the normal direction is the same for all deformed metrics is actually not needed for differentiability.}. Therefore, from  the expression \eqref{hk} also the map $\eps\mapsto u_{\eps}(t,x)$ is differentiable and we denote:
$$
u'(t,x)=\dfrac{d}{d\eps}|_{\eps=0}u_{\eps}(t,x).
$$
Accordingly, we denote
\begin{equation}\label{hepsilon}
H_{\Omega_{\eps}}(t)=\int_{\Omega}u_{\eps}(t,x)\,dv_{\eps}(x), 
\quad H'_{\Omega}(t)=\dfrac{d}{d\eps}|_{\eps=0}H_{\Omega_{\eps}}(t)
\end{equation}
(we stress that the prime indicates differentation with respect to $\epsilon$ along $V$, and not with respect to time $t$, which in this discussion is fixed).

\smallskip

The proof goes as follows: we first express the function $u'$ in terms of the Dirichlet heat kernel of $g$ and $\Delta'u$. Then, 
using an explicit expression of the operator $\Delta'$, and integration by parts, we obtain the final statement. To avoid discussing, at every step, the convergence of the integrals involved we use a suitable approximation $v(t,x)$ of the function $u(t,x)$ by a small parameter $\delta$ and then pass to the limit as $\delta\to 0$ to obtain the statement. 

\subsection{Approximation of $u(t,x)$} We fix $\delta>0$ and small and we introduce the function $v:(0,\infty)\times \Omega\to\reals$ defined by:
$$
v(t,x)=u(t+\delta,x)
$$
(for simplicity of notation, we omit to write explicitly the dependance of $v$ on $\delta$). 

Then, $v$ satisfies the heat equation on $(\Omega,g)$ with initial data $v(0,x)=u(\delta,x)$ and Dirichlet boundary conditions. Since the initial data vanishes on the boundary, $v(t,x)$ extends to a smooth function on $[0,\infty)\times\bar\Omega$ and we thus can avoid dealing with the distributional behavior of $u(t,\cdot)$ near the boundary, for small times, in the sense that any derivative of $v$ is uniformly bounded on $[0,\infty)\times\bar\Omega$. Since:
$$
\derive ut(t,x)=-\int_{\bd\Omega}\derive{k}{N_y}(t,x,y)\,d\sigma(y)<0
$$
we see that $u(t,x)$ is decreasing in $t$ at any point $x$: this implies that
$v(t,x)\leq u(t,x)\leq 1$ on $(0,\infty)\times\Omega$, and so, as both functions vanish on the boundary:
\begin{equation}\label{normalder}
\derive vN(t,y)\leq \derive uN(t,y)
\end{equation}
for all $t>0$ and $y\in\bd\Omega$. Finally, it is clear that $v(t,\cdot)\to u(t,\cdot)$ together with all of its derivatives, as $\delta\to 0$.

\smallskip

Let $(\Omega,g_{\eps})$ be a smooth deformation of $(\Omega,g)$ and let $v_{\eps}(t,x)$ be the solution of the Dirichlet heat equation in $(\Omega, g_{\eps})$ with initial data $u(\delta,x)$ (not depending on $\eps$). Hence $v_{\eps}$ satisfies:
$$
\threesystem
{\Delta_{\eps} v_{\eps}+\derive{v_{\eps}}{t}=0\quad\text{on}\quad (0,\infty)\times\Omega}
{v_{\eps}(0,x)=u(\delta,x)\quad\text{for all }\quad x\in\Omega}
{v_{\eps}(t,y)=0\quad\text{for all }\quad y\in\bd\Omega, \quad t>0}
$$ 
We introduce the following notation for a fixed $\delta>0$:
$$
v'(t,x)=\dfrac{d}{d\eps}|_{\eps=0}v_{\eps}(t,x), \quad
H^{(\eps)}_{\Omega,\delta}(t)=\int_{\Omega}v_{\eps}(t,x)dv_{\eps}(x), \quad H'_{\Omega,\delta}(t)=\dfrac{d}{d\eps}|_{\eps=0}H^{(\eps)}_{\Omega,\delta}(t).
$$

\begin{lemme} For any fixed $t>0$ one has:
$
u'(t,x)=\lim_{\delta\to 0}v'(t,x)
$
and therefore:
$$
D\mathcal F_t(\Omega,V)\doteq H'_{\Omega}(t)=\lim_{\delta\to 0}H'_{\Omega,\delta}(t).
$$
\end{lemme}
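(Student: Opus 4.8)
The plan is to establish the two claimed limits by a routine stability argument for solutions of the heat equation under the perturbation parameter $\delta$, combined with the differentiability of $\eps\mapsto u_\eps$ already quoted from \cite{RS}. Note first that $v_\eps(t,x)=u_\eps(t+\delta,x)$, since both sides solve the Dirichlet heat equation in $(\Omega,g_\eps)$ with the same initial datum $u(\delta,\cdot)$ at time $0$ (use the semigroup property of $u_\eps$ and uniqueness). Differentiating this identity in $\eps$ at $\eps=0$ gives $v'(t,x)=u'(t+\delta,x)$. So the content of the first assertion is simply that $u'(\cdot,x)$ is continuous in the time variable at $t>0$, whence $\lim_{\delta\to 0}v'(t,x)=\lim_{\delta\to 0}u'(t+\delta,x)=u'(t,x)$.

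To see that $u'(t,x)$ is continuous — indeed smooth — in $t$ on $(0,\infty)$, I would argue as follows. From \eqref{hk} and the differentiability of $\eps\mapsto k_\eps$ in \cite{RS}, one has $u'(t,x)=\int_\Omega k'(t,x,y)\,dv(y)+\int_\Omega k(t,x,y)\,dv'(y)$, where $k'=\frac{d}{d\eps}|_{\eps=0}k_\eps$. Both terms are smooth in $t>0$: the Dirichlet heat kernel $k(t,x,y)$ is smooth in $t$ for $t>0$ away from the diagonal issues that are integrated out here, and the same regularity in $t$ is inherited by $k'$ (which itself solves an inhomogeneous heat-type equation driven by $\Delta' $ applied to $k$, with zero initial data, so it is as regular in $t$ as the forcing term). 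Hence $t\mapsto u'(t,x)$ is continuous on $(0,\infty)$, giving the first claim.

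For the second claim, write $H^{(\eps)}_{\Omega,\delta}(t)=\int_\Omega v_\eps(t,x)\,dv_\eps(x)=\int_\Omega u_\eps(t+\delta,x)\,dv_\eps(x)=H_{\Omega_\eps}(t+\delta)$. Differentiating at $\eps=0$ gives $H'_{\Omega,\delta}(t)=H'_\Omega(t+\delta)$, and then $\lim_{\delta\to 0}H'_{\Omega,\delta}(t)=\lim_{\delta\to 0}H'_\Omega(t+\delta)=H'_\Omega(t)$, using that $H'_\Omega$ is continuous in $t$ on $(0,\infty)$ — which follows either from the representation $H'_\Omega(t)=\int_\Omega u'(t,x)\,dv(x)+\int_\Omega u(t,x)\,dv'(x)$ together with the continuity in $t$ established above and dominated convergence (all integrands are uniformly bounded on $\bar\Omega$ for $t$ in a compact subinterval of $(0,\infty)$, since $v$ and its derivatives are bounded there, and $u\le 1$), or directly from $H'_\Omega(t)=\frac{d}{d\eps}|_{\eps=0}H_{\Omega_\eps}(t)$ and the joint smoothness quoted from \cite{RS}.

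I do not expect a genuine obstacle here: the only point requiring care is making precise, with the right uniform bounds, that the $\eps$-derivative commutes with the $\delta\to 0$ limit — and this is exactly why the paper introduced the smoothed function $v(t,x)=u(t+\delta,x)$, whose derivatives are uniformly bounded up to the boundary on $[0,\infty)\times\bar\Omega$. The substitution $v_\eps(t,\cdot)=u_\eps(t+\delta,\cdot)$ converts everything into a statement about continuity of $H'_\Omega$ (equivalently of $u'$) in the time variable away from $t=0$, which is immediate from the parabolic regularity of the heat kernel and its $\eps$-variation. The real work of the section — extracting the boundary integral formula — is deferred to the subsequent computation of $u'$ in terms of $\Delta'$ and integration by parts; this lemma is only the bookkeeping step that legitimizes passing to the limit at the end.
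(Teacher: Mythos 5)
Your key identity $v_\eps(t,x)=u_\eps(t+\delta,x)$ is false for $\eps\neq 0$, and the whole reduction rests on it. By construction the initial datum of $v_\eps$ is the \emph{unperturbed} temperature $u(\delta,\cdot)$, the same function for every $\eps$; the semigroup property in the metric $g_\eps$ identifies $u_\eps(t+\delta,\cdot)$ with the solution at time $t$ having initial datum $u_\eps(\delta,\cdot)$, which differs from $u(\delta,\cdot)$ once $\eps\neq 0$. Concretely, $u_\eps(t+\delta,x)=\int_\Omega k_\eps(t,x,y)\,u_\eps(\delta,y)\,dv_\eps(y)$ while $v_\eps(t,x)=\int_\Omega k_\eps(t,x,y)\,u(\delta,y)\,dv_\eps(y)$, so differentiating at $\eps=0$ gives $u'(t+\delta,x)=v'(t,x)+\int_\Omega k(t,x,y)\,u'(\delta,y)\,dv(y)$ rather than $v'(t,x)=u'(t+\delta,x)$. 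The same conflation invalidates $H^{(\eps)}_{\Omega,\delta}(t)=H_{\Omega_\eps}(t+\delta)$ and hence $H'_{\Omega,\delta}(t)=H'_\Omega(t+\delta)$. Thus reducing the lemma to continuity of $u'$ (resp.\ of $H'_\Omega$) in the time variable does not go through: to repair it you would have to show that $\int_\Omega k(t,x,y)\,u'(\delta,y)\,dv(y)\to 0$ as $\delta\to 0$, i.e.\ control $u'(\delta,\cdot)$ uniformly near $t=0$ up to the boundary, which is precisely the delicate small-time regime that the smoothing by $\delta$ was introduced to avoid; the formal fact $u_\eps(0,\cdot)\equiv 1$ does not by itself provide this.

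The intended argument bypasses any time-continuity of $u'$: differentiate the representation $v_\eps(t,x)=\int_\Omega k_\eps(t,x,y)\,u(\delta,y)\,dv_\eps(y)$ in $\eps$ at $\eps=0$, obtaining $v'(t,x)=\int_\Omega k'(t,x,y)\,u(\delta,y)\,dv(y)+\int_\Omega k(t,x,y)\,u(\delta,y)\,dv'(y)$, and then let $\delta\to 0$ using that $u(\delta,\cdot)$ is bounded by $1$ and converges to $1$; dominated convergence yields exactly $u'(t,x)=\int_\Omega k'(t,x,y)\,dv(y)+\int_\Omega k(t,x,y)\,dv'(y)$, and the statement for $H'_{\Omega,\delta}$ follows by the same limit passage in $H'_{\Omega,\delta}(t)=\int_\Omega v'(t,x)\,dv(x)+\int_\Omega v(t,x)\,dv'(x)$. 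Your instinct that the lemma is a bookkeeping step is right, but the bookkeeping must be done on the $\eps$-derivative of the kernel representation with fixed, $\eps$-independent initial datum, not through a semigroup shift that silently changes that datum.
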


\begin{proof} First observe that
$$
v_{\eps}(t,x)=\int_{\Omega}k_{\eps}(t,x,y)u(\delta,y)\,dv_{\eps}(y)
$$
hence, differentiating both sides with respect to $\epsilon$ and setting $\eps=0$:
$$
v'(t,x)=\int_{\Omega}k'(t,x,y)u(\delta,y)\,dv(y)+\int_{\Omega}k(t,x,y)u(\delta,y)dv'(y).
$$
We now let $\delta\to 0$ on both sides; as $u(\delta,x)$ is uniformly bounded by $1$ for all $\delta$ and converges to $1$  as  $\delta\to 0$ we get, for all fixed $t>0$:
$$
\lim_{\delta\to 0}v'(t,x)=\int_{\Omega}k'(t,x,y)\,dv(y)+\int_{\Omega}k(t,x,y)dv'(y)=u'(t,x)
$$
which is the first assertion. The second assertion follows from the first by a similar argument, passing to the limit in:
\begin{equation}\label{hprimedelta}
H'_{\Omega,\delta}(t)=\int_{\Omega}v'(t,x)dv(x)+\int_{\Omega}v(t,x)dv'(x).
\end{equation}
\end{proof}

\subsection{Duhamel principle} We now fix $\delta$ and work on a convenient expression of $H'_{\Omega,\delta}(t)$.  Derive both sides of the equation
$
\Delta_{\eps}v_{\eps}+\derive{v_{\eps}}{t}=0
$
and set $\eps=0$ to get:
$$
\Delta'v+\Delta v'+\derive{v'}{t}=0.
$$
Therefore, since the initial data of $v_{\eps}$ does not depend on $\eps$, the function $v'(t,x)$ satisfies the following heat equation in the original metric $g$:
$$
\threesystem
{\Delta v'(t,x)+\derive{v'}{t}(t,x)=-\Delta'v(t,x) \quad\text{for all}\quad (t,x)\in(0,\infty)\times\Omega}
{v'(0,x)=0\quad\text{for all}\quad x\in\Omega}
{v'(t,y)=0\quad\text{for all}\quad y\in\bd\Omega, \,t>0}
$$
so that, by Duhamel principle:
$$
v'(t,x)=-\int_0^t\int_{\Omega}k(t-\tau,x,y)\Delta'v(\tau,y)\,dv(y)d\tau.
$$
Integrating on $\Omega$ with respect to $x$ we see (use Fubini and observe $\lim_{\delta\to 0}v(t,x)=u(t,x)$):
$$
\begin{aligned}
\int_{\Omega}v'(t,x)dv(x)&=-\int_0^t\int_{\Omega}u(t-\tau,y)\Delta'v(\tau,y)\,dv(y)d\tau\\
&=-\lim_{\delta\to 0}\int_0^t\int_{\Omega}v(t-\tau,y)\Delta'v(\tau,y)\,dv(y)\,d\tau\\
&=-\frac 12\lim_{\delta\to 0}\int_0^t\int_{\Omega}\Big(v(t-\tau,y)\Delta'v(\tau,y)+v(\tau,y)\Delta' v(t-\tau,y)\Big)\,dv(y)\,d\tau
\end{aligned}
$$
Using \eqref{hprimedelta}, knowing that $\lim_{\delta\to 0}v(t,x)=u(t,x)$ and that $dv'=-\delta V\,dv$ we arrive at the following expression.

\begin{lemme}\label{hprime} In the above notation:
$$
H'_{\Omega}(t)=-\frac 12\lim_{\delta\to 0}\int_0^t\int_{\Omega}\Big(v(t-\tau,y)\Delta'v(\tau,y)+v(\tau,y)\Delta' v(t-\tau,y)\Big)\,dv(y)\,d\tau-\int_{\Omega}u(t,x)\delta V(x)dv(x).
$$
\end{lemme}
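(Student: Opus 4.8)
The plan is to exploit the inhomogeneous heat equation for $v'$ established just above the statement: since the initial datum of $v_\eps$ is $\eps$-independent, differentiating $\Delta_\eps v_\eps+\partial_t v_\eps=0$ at $\eps=0$ shows that $v'$ solves $\Delta v'+\partial_t v'=-\Delta'v$ with vanishing initial and boundary data, so Duhamel's principle writes $v'(t,x)=-\int_0^t\int_\Omega k(t-\tau,x,y)\Delta'v(\tau,y)\,dv(y)\,d\tau$ in terms of the \emph{fixed} Dirichlet heat kernel $k$ of $(\Omega,g)$.

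Next I would integrate this in $x$ over $\Omega$ and simplify. Fubini is legitimate here precisely because the approximation $v(t,x)=u(t+\delta,x)$ was chosen so that $v$ and all its derivatives are uniformly bounded on $[0,\infty)\times\bar\Omega$, while $\Delta'$ is a smooth second order operator with coefficients bounded on $\bar\Omega$ and $(0,t)\times\Omega$ has finite measure. Using the symmetry of $k$ together with $\int_\Omega k(s,x,y)\,dv(x)=u(s,y)$ from \eqref{hk}, this gives $\int_\Omega v'(t,x)\,dv(x)=-\int_0^t\int_\Omega u(t-\tau,y)\Delta'v(\tau,y)\,dv(y)\,d\tau$. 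I would then symmetrize in $\tau$: the substitution $\tau\mapsto t-\tau$ in the outer integral shows $\int_0^t\int_\Omega v(t-\tau,y)\Delta'v(\tau,y)\,dv\,d\tau=\int_0^t\int_\Omega v(\tau,y)\Delta'v(t-\tau,y)\,dv\,d\tau$, so each equals their half sum; writing $u=\lim_{\delta\to0}v$ in the integrand and pulling the limit out by dominated convergence then produces the symmetrized double integral appearing in the statement. This symmetric form is what will feed cleanly into the integration by parts performed in the next step of the paper, yielding the symmetric boundary integrand of Theorem \ref{vformula}.

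Finally I would assemble $H'_\Omega(t)$. By \eqref{hprimedelta} one has $H'_{\Omega,\delta}(t)=\int_\Omega v'(t,x)\,dv(x)+\int_\Omega v(t,x)\,dv'(x)$, and by \eqref{standard} $dv'=-\delta V\,dv$, so the second term equals $-\int_\Omega v(t,x)\,\delta V(x)\,dv(x)$, which tends to $-\int_\Omega u(t,x)\,\delta V(x)\,dv(x)$ as $\delta\to0$ since $v(t,\cdot)\to u(t,\cdot)$. Combining this with the previous Lemma, which identifies $H'_\Omega(t)=\lim_{\delta\to0}H'_{\Omega,\delta}(t)$, and with the expression for $\int_\Omega v'(t,x)\,dv(x)$ obtained in the previous paragraph, gives exactly the claimed identity.

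The hard part is not conceptual but the bookkeeping of the limit interchanges: differentiation under the integral in $\eps$ (supplied by the differentiability of $\eps\mapsto k_\eps$ from \cite{RS}), Fubini in the space-time double integral, and the passage $\delta\to0$ through the iterated integrals. Each of these is controlled by the uniform boundedness up to the boundary of $v=u(\cdot+\delta,\cdot)$ and its derivatives — the reason the $\delta$-approximation was introduced in the first place — whereas working directly with $u$ would force one to handle the $t^{-1/2}$ blow-up of $\partial u/\partial N$ near $\bd\Omega$ as $t\to0$.
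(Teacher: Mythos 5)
Your proposal follows essentially the same route as the paper's own proof: Duhamel's principle for $v'$, integration in $x$ via Fubini and the symmetry of the Dirichlet heat kernel (so that $\int_\Omega k(s,x,y)\,dv(x)=u(s,y)$), symmetrization in $\tau$ through the substitution $\tau\mapsto t-\tau$, and finally assembling $H'_{\Omega,\delta}(t)$ from \eqref{hprimedelta} together with $dv'=-\delta V\,dv$ and the limit $\delta\to 0$ supplied by the preceding lemma. This is correct and matches the paper's argument, including the role of the $\delta$-approximation in justifying the interchanges of limits and integrals.
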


The final step is to deal with the integral involving the operator $\Delta'$. This is done in the following lemma.

\begin{lemme}\label{rellich} If the functions $f,h$ vanish on the boundary one has:
$$
\begin{aligned}
\int_{\Omega}(f\Delta'h+h\Delta' f)dv=&
-2\int_{\Omega}\Big(\scal{V}{\nabla f}\Delta h+\scal{V}{\nabla h}\Delta f\Big)dv+
\int_{\Omega}(f\Delta h+h\Delta f)\delta Vdv\\
&+2\int_{\bd\Omega}\scal{V}{N}\derive fN\derive hNd\sigma
\end{aligned}
$$
\end{lemme}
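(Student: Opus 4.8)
\noindent\emph{Idea of proof.} The plan is to obtain the identity by differentiating the weak (Dirichlet energy) formulation of the Laplacian along the deformation $g_{\eps}$, so that the pointwise expression of $\Delta'$ is not even needed. Since $f$ and $h$ are fixed and vanish on $\bd\Omega$, Green's formula in the metric $g_{\eps}$ produces no boundary term and gives
$$
\int_{\Omega}(f\Delta_{\eps} h+h\Delta_{\eps} f)\,dv_{\eps}=2\int_{\Omega}\scal{\nabla f}{\nabla h}_{g_{\eps}}\,dv_{\eps}=2\int_{\Omega}g_{\eps}^{ij}\,\partial_i f\,\partial_j h\,\sqrt{\det g_{\eps}}\;dx .
$$
First I would differentiate both sides at $\eps=0$. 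Using $(g^{ij})'=-(g')^{ij}$ (indices raised with $g$) and $\frac{d}{d\eps}\big|_{\eps=0}\sqrt{\det g_{\eps}}=\tfrac12\big(\tr_g g'\big)\sqrt{\det g}=({\rm div}\,V)\sqrt{\det g}$, the right-hand side becomes $2\int_{\Omega}\big(-g'(\nabla f,\nabla h)+\scal{\nabla f}{\nabla h}\,{\rm div}\,V\big)\,dv$; since $f,h$ do not depend on $\eps$ and $dv'={\rm div}\,V\,dv$ by \eqref{standard}, the left-hand side becomes $\int_{\Omega}(f\Delta'h+h\Delta'f)\,dv+\int_{\Omega}(f\Delta h+h\Delta f)\,{\rm div}\,V\,dv$. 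Equating and solving for the wanted integral reduces everything to rewriting $\int_{\Omega}g'(\nabla f,\nabla h)\,dv$, and the only input about the deformation is that $g'=\mathcal L_V g$, which is part of Lemma \ref{prime}.

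The second step handles $\int_{\Omega}g'(\nabla f,\nabla h)\,dv$ with a Rellich--Pohozaev type divergence identity. A direct computation --- in which the Hessian terms of $f$ and $h$ cancel by symmetry, while $\scal{\nabla_{\nabla f}V}{\nabla h}+\scal{\nabla_{\nabla h}V}{\nabla f}$ recombines into $(\mathcal L_V g)(\nabla f,\nabla h)=g'(\nabla f,\nabla h)$ --- shows that the vector field
$$
W=\scal{V}{\nabla f}\nabla h+\scal{V}{\nabla h}\nabla f-\scal{\nabla f}{\nabla h}\,V
$$
satisfies ${\rm div}\,W=g'(\nabla f,\nabla h)-\scal{V}{\nabla f}\Delta h-\scal{V}{\nabla h}\Delta f-\scal{\nabla f}{\nabla h}\,{\rm div}\,V$. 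Integrating over $\Omega$, applying the divergence theorem (with outer unit normal $-N$), and using that on $\bd\Omega$ the functions vanish, so that $\nabla f=\derive fN\,N$ and $\nabla h=\derive hN\,N$, the three boundary contributions collapse to a single term and one gets
$$
\int_{\Omega}g'(\nabla f,\nabla h)\,dv=\int_{\Omega}\big(\scal{V}{\nabla f}\Delta h+\scal{V}{\nabla h}\Delta f\big)\,dv+\int_{\Omega}\scal{\nabla f}{\nabla h}\,{\rm div}\,V\,dv-\int_{\bd\Omega}\scal{V}{N}\derive fN\derive hN\,d\sigma .
$$
Substituting this back into the equation from the first step, the two $\scal{\nabla f}{\nabla h}\,{\rm div}\,V$ terms cancel and, since ${\rm div}\,V=-\delta V$, one arrives exactly at the claimed identity.

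The manipulations are routine; the points that need care are the sign conventions ($\Delta$ is the positive Laplacian, so ${\rm div}\,\nabla=-\Delta$; $N$ is the inner unit normal, so the divergence theorem carries $-N$; and $\delta V=-{\rm div}\,V$), and checking that the Hessian terms in the divergence identity for $W$ genuinely cancel. I expect the place where an error is most likely to slip in is the boundary bookkeeping --- tracking the three contributions of $\scal{W}{-N}$ on $\bd\Omega$ down to $-\scal{V}{N}\derive fN\derive hN$ --- so I would carry that step out in full. Note that no property of the deformation beyond $g'=\mathcal L_V g$ and $dv'={\rm div}\,V\,dv$ is used, which is why the explicit pointwise formula for $\Delta'$ from Lemma \ref{prime} is not required here.
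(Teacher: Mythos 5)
Your argument is correct, and its second half is essentially the paper's: the divergence identity for $W=\scal{V}{\nabla f}\nabla h+\scal{V}{\nabla h}\nabla f-\scal{\nabla f}{\nabla h}V$ is exactly the polarized form of the Rellich-type identity of Lemma \ref{maintechtwo} (applied with $X=V$), with the same cancellation of Hessian terms by symmetry and the same boundary bookkeeping, which indeed collapses to $\scal{W}{-N}=-\scal{V}{N}\derive fN\derive hN$ since $\nabla f=\derive fN N$ and $\nabla h=\derive hN N$ on $\bd\Omega$. Where you genuinely diverge is the first half. The paper reduces to $f=h$ by polarization and computes $\int_\Omega \phi\Delta'\phi\,dv$ from Berger's explicit formula \eqref{deltaprime} for $\Delta'$ via Green-formula manipulations (Lemma \ref{maintechone}); you instead differentiate the $\eps$-dependent Green identity $\int_\Omega(f\Delta_\eps h+h\Delta_\eps f)\,dv_\eps=2\int_\Omega g_\eps^{ij}\partial_i f\,\partial_j h\sqrt{\det g_\eps}\,dx$, which requires only $g'=\mathcal L_Vg$ (hence $\tr_g g'=2\,{\rm div}\,V$, i.e. \eqref{lemmadue}) and $dv'={\rm div}\,V\,dv$, and never the pointwise expression of $\Delta'$. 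This buys a shorter and more self-contained proof of the lemma, avoids the polarization step, and isolates exactly which variational data are needed; the only extra point to justify is differentiation under the integral sign in $\eps$, which is harmless because $f,h$ are fixed smooth functions on the compact $\bar\Omega$ and $g_\eps$ depends smoothly on $\eps$ (the paper, working with $\Delta'$ directly, does not need this, and its explicit $\Delta'$ is in any case machinery it has already set up). Your sign conventions ($\Delta=-{\rm div}\,\nabla$, $\delta V=-{\rm div}\,V$, inner normal $N$) match the paper's, and the final cancellation of the two $\scal{\nabla f}{\nabla h}\,{\rm div}\,V$ terms is correct, so the identity comes out exactly as stated.
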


\begin{proof} See Appendix 1. 
\end{proof}

%%%%

\subsection{Proof of Theorem \ref{vformula}} 
We use Lemma \ref{hprime} and apply Lemma \ref{rellich} taking $f=v({\tau},\cdot)$ and $h=v({t-\tau},\cdot)$. We obtain
\begin{equation}\label{terma}
\begin{aligned}
-\frac 12\int_0^t\int_{\Omega}&\Big(v(t-\tau,y)\Delta'v(\tau,y)+v(\tau,y)\Delta' v(t-\tau,y)\Big)\,dv\,d\tau\\
&=\int_0^t\int_{\Omega}\Big(\scal{V(y)}{\nabla v({\tau},y)}\Delta v({t-\tau},y)+\scal{V(y)}{\nabla v({t-\tau},y)}\Delta v(\tau,y)\Big)\,dv\,d\tau\\
&-\frac 12\int_0^t\int_{\Omega}\Big(v(\tau,y)\Delta v(t-\tau,y)+v(t-\tau,y)\Delta v(\tau,y)\Big)\delta V(y)dv\,d\tau\\
&-\int_0^t\int_{\bd\Omega}\scal{V}{N}(y)\derive {v}N(t-\tau,y)\derive {v}N(\tau,y)\,d\sigma \,d\tau
\end{aligned}
\end{equation}
(integration on $\Omega$ is of course taken with respect to the variable $y$).

\smallskip

The first term. Exchange order of integration, and recall that $v$ satisfies the heat equation, so that $\Delta v(t-\tau,y)=-\derive{v}{t}(t-\tau,y)=\derive{v}{\tau}(t-\tau,y)$. Integrate by parts in the integral involving $\tau$. Get:
\begin{equation}\label{termaa}
\begin{aligned}
\int_0^t&\int_{\Omega}\Big(\scal{V(y)}{\nabla v({\tau},y)}\Delta v({t-\tau},y)dvd\tau\\
&=\int_{\Omega}\int_0^t\derive v{\tau}(t-\tau,y)\scal{V(y)}{\nabla v(\tau,y)}\,d\tau dv\\
&=\int_{\Omega}\Big[v(t-\tau,y)\scal{V(y)}{\nabla v(\tau,y)}\Big]_{\tau=0}^{\tau=t}dv-\int_{\Omega}\int_0^tv(t-\tau,y)\scal{V(y)}{\nabla\derive vt(\tau,y)}\,d\tau dv\\
&=\int_{\Omega}\Big(v(0,y)\scal{V(y)}{\nabla v(t,y)}-v(t,y)\scal{V(y)}{\nabla v(0,y)}\Big)\,dv(x)-\int_{\Omega}\int_0^tv(t-\tau,y)\scal{V(y)}{\nabla\derive vt(\tau,y)}\,d\tau dv\\
\end{aligned}
\end{equation}

By Green formula, since $v(t,\cdot)$ vanishes on the boundary:
\begin{equation}\label{inter}
\begin{aligned}
\int_{\Omega}v(0,y)\scal{V(y))}{\nabla v(t,y)}dv&=\int_{\Omega}v(t,y)\delta(v(0,y)V)dv\\
&=-\int_{\Omega}v(t,y)\scal{V(y)}{\nabla v(0,y)}dv+\int_{\Omega}v(t,y)v(0,y)\delta V(y)\,dv
\end{aligned}
\end{equation}

Inserting \eqref{inter} in \eqref{termaa} we conclude:
\begin{equation}\label{addone}
\begin{aligned}
\int_0^t\int_{\Omega}\scal{V(y)}{\nabla v({\tau},y)}\Delta v({t-\tau},y)dvd\tau
&=\int_{\Omega}\Big(-2v(t,y)\scal{V(y)}{\nabla v(0,y)}+v(0,y)v(t,y)\delta V(y)\Big)dv\\
&-\int_{\Omega}\int_0^tv(t-\tau,y)\scal{V(y)}{\nabla\derive{v}{t}(\tau,y)}d\tau dv
\end{aligned}
\end{equation}

On the other hand, using Green formula, since $v(t,\cdot)$ vanishes on the boundary:
\begin{equation}\label{addtwo}
\begin{aligned}
\int_0^t&\int_{\Omega}\scal{V(y)}{\nabla v(t-\tau,y)}\Delta v(\tau,y)dv\,d\tau=
\int_0^t\int_{\Omega}\scal{(\Delta v(\tau,y)) V(y)}{\nabla v(t-\tau,y)}dv\,d\tau\\
&=\int_0^t\int_{\Omega}v(t-\tau,y)\delta\big(\Delta v(\tau,\cdot)V\big)dv\,d\tau\\
&=-\int_0^t\int_{\Omega}v(t-\tau,y)\scal{\nabla\Delta v(\tau,y)}{V(y)}dv\,d\tau
+\int_0^t\int_{\Omega}v(t-\tau,y)\Delta v(\tau,y)\delta V(y)\,dvd\tau\\
&=\int_0^t\int_{\Omega}v(t-\tau,y)\scal{\nabla\derive{v}{t}(\tau,y)}{V(y)}dv\,d\tau
+\frac 12\int_0^t\int_{\Omega}
\Big(v(t-\tau,y)\Delta v(\tau,y)+v(\tau,y)\Delta v(t-\tau,y)\Big)\delta V(y)\,dvd\tau
\end{aligned}
\end{equation}
Adding \eqref{addone} and \eqref{addtwo} we arrive at:
\begin{equation}\label{addthree}
\begin{aligned}
\int_0^t\int_{\Omega}&\Big(\scal{V(y)}{\nabla v(\tau,y)}\Delta v(t-\tau,y)+\scal{V(y)}{\nabla v(t-\tau,y)}\Delta v(\tau,y)\Big)\,dv\,d\tau\\
=&\int_{\Omega}\Big(-2v(t,y)\scal{V(y)}{\nabla v(0,y)}+v(0,y)v(t,y)\delta V(y)\Big)dv\\
&+\frac 12\int_0^t\int_{\Omega}
\Big(v(t-\tau,y)\Delta v(\tau,y)+v(\tau,y)\Delta v(t-\tau,y)\Big)\delta V(y)dvd\tau
\end{aligned}
\end{equation}
Substituting \eqref{addthree} in \eqref{terma} we obtain:
\begin{equation}\label{total}
\begin{aligned}
-\frac 12\int_0^t\int_{\Omega}&\Big(v(t-\tau,y)\Delta'v(\tau,y)+v(\tau,y)\Delta' v(t-\tau,y)\Big)\,dv\,d\tau\\
&=\int_{\Omega}\Big(-2v(t,y)\scal{V(y)}{\nabla v(0,y)}+v(0,y)v(t,y)\delta V(y)\Big)dv\\
&-\int_0^t\int_{\bd\Omega}\scal{V}{N}(y)\derive {v}N(t-\tau,y)\derive {v}N(\tau,y)\,d\sigma\,d\tau
\end{aligned}
\end{equation}

We now pass to the limit as $\delta\to 0$ in \eqref{total}. Recall that  $v(t,\cdot)\to u(t,\cdot)$ and $v(0,y)\to 1$; by Green formula, since $u(t,\cdot)$ vanishes on $\bd\Omega$, we see that
$$
\lim_{\delta\to 0}\int_{\Omega}(-2v(t,y)\scal{V(y)}{\nabla v(0,y)})\,dv=
-2\int_{\Omega}\delta(u(t,\cdot)V)\,dv=-2\int_{\bd\Omega}u(t,\cdot)\scal{V}{N}\,d\sigma=0
$$
On the other hand, for all $s\in (0,t)$ one has that  $\derive{v}{N}(s,y)$ converges to $\derive{u}{N}(s,y)$ (from below, see \eqref{normalder}). Therefore, by the dominated convergence theorem, we conclude:
$$
\begin{aligned}
-\frac 12\lim_{\delta\to 0}&\int_0^t\int_{\Omega}\Big(v(t-\tau,y)\Delta'v(\tau,y)+v(\tau,y)\Delta' v(t-\tau,y)\Big)\,dv(y)\,d\tau\\
&=\int_{\Omega}u(t,y)\delta V(y)\,dv-\int_0^t\int_{\bd\Omega}\scal{V}{N}(y)\derive {u}N(t-\tau,y)\derive {u}N(\tau,y)\,d\sigma(y)\,d\tau
\end{aligned}
$$
and taking into account Lemma \ref{hprime} we arrive at the desired formula:
$$
H'_{\Omega}(t)=-\int_0^t\int_{\bd\Omega}\scal{V}{N}(y)\derive {u}N(t-\tau,y)\derive {u}N(\tau,y)\,d\sigma(y)\,d\tau.
$$

%%%%
%%%%

\section{Equivalence between parts a) and c) of Theorem \ref{main}} 

In this section we prove the following fact.

\begin{thm} $\Omega$ is critical for the functional $\mathcal F_t(\Omega)$ given by the heat content at time $t$, for all $t>0$, if and only if $\Omega$ has the constant flow property.
\end{thm}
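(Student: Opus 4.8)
The plan is to use the first variation formula from Theorem \ref{vformula}, which gives
$$
D\mathcal F_t(\Omega,V)=-\int_0^t\int_{\bd\Omega}\scal{V}{N}(y)\,\derive{u}{N}(\tau,y)\,\derive{u}{N}(t-\tau,y)\,d\sigma(y)\,d\tau,
$$
and to rewrite it by exchanging the order of integration as
$$
D\mathcal F_t(\Omega,V)=-\int_{\bd\Omega}\scal{V}{N}(y)\,\Phi_t(y)\,d\sigma(y),\qquad \Phi_t(y)\doteq\int_0^t\derive{u}{N}(\tau,y)\,\derive{u}{N}(t-\tau,y)\,d\tau.
$$
Then $\Omega$ is critical among volume preserving deformations precisely when this vanishes for every $V$ satisfying $\int_{\bd\Omega}\scal{V}{N}\,d\sigma=0$; since $\scal{V}{N}|_{\bd\Omega}$ can be taken to be an arbitrary smooth function of mean value zero on $\bd\Omega$, criticality is equivalent to $\Phi_t$ being constant on $\bd\Omega$ for every $t>0$.

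The nontrivial direction is then to show: $\Phi_t$ is constant on $\bd\Omega$ for all $t>0$ $\iff$ $\derive{u}{N}(t,\cdot)$ is constant on $\bd\Omega$ for all $t>0$. The "$\Leftarrow$" implication is immediate from the convolution formula. For "$\Rightarrow$" I would pass to the Laplace transform in $t$: writing $\psi(s,y)=\int_0^\infty e^{-st}\derive{u}{N}(t,y)\,dt$ (which is well-defined for $s>0$ in view of the small-time asymptotics $\derive{u}{N}(t,y)=\frac{1}{\sqrt{\pi t}}+O(1)$ and the exponential decay of $H_\Omega(t)$ at large times), the convolution $\Phi_t$ transforms into the pointwise square $\psi(s,y)^2$. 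Hence $\Phi_t$ constant in $y$ for all $t$ forces $\psi(s,y)^2$ to be independent of $y$ for all $s>0$; since $\derive{u}{N}(t,y)>0$ we have $\psi(s,y)>0$, so $\psi(s,y)=\sqrt{\Phi\text{-transform}(s)}$ is itself independent of $y$, and inverting the Laplace transform gives $\derive{u}{N}(t,y)$ independent of $y$, i.e. the constant flow property.

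The main obstacle is the analytic justification of these manipulations: checking that the order of integration may be exchanged and that the Laplace transforms converge and may be inverted, which rests on controlling $\derive{u}{N}(t,y)$ uniformly in $y\in\bd\Omega$ both as $t\to 0^+$ (via the cited expansion $\derive{u}{N}(t,y)=\frac{1}{\sqrt{\pi t}}+O(1)$) and as $t\to\infty$ (via $H_\Omega(t)\sim c_\Omega^2 e^{-\lambda_1(\Omega)t}$ and standard parabolic estimates bounding the boundary flux by the decay of the heat content). An alternative to the Laplace transform, avoiding inversion issues, is a direct argument: fix $y_1,y_2\in\bd\Omega$ and set $w(t)=\derive{u}{N}(t,y_1)-\derive{u}{N}(t,y_2)$; constancy of $\Phi_t$ says $\int_0^t w(\tau)\big(\derive{u}{N}(\tau,y_1)+\derive{u}{N}(t-\tau,y_2)\big)\,d\tau=0$ for all $t$, and since the kernel is positive and $w$ is continuous with $w(t)=o(t^{-1/2})$ near $0$, a Titchmarsh-convolution / positivity argument forces $w\equiv 0$. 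Either route concludes the equivalence of a) and c).
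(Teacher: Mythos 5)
Your proposal is correct and follows essentially the same route as the paper: criticality is reduced, via the first variation formula and the arbitrariness of zero-mean $\scal{V}{N}$, to the convolution $B(t,y)=\int_0^t\derive{u}{N}(\tau,y)\derive{u}{N}(t-\tau,y)\,d\tau$ being constant in $y$, and then the Laplace transform turns this into the statement that $\big(\widehat{\partial u/\partial N}\big)^2(s,y)$ is independent of $y$, whence positivity of the flow and injectivity of the Laplace transform give the constant flow property. Your remarks on convergence (small-time asymptotics, exponential large-time decay) and the alternative positivity/Titchmarsh argument are sensible additions but not a different method.
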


Recall the formula for the first variation (Theorem \ref{vformula}). 
$$
D\mathcal F_t(\Omega,V)=-\int_0^t\int_{\bd\Omega}\scal{V}{N}\derive{u}{N}(\tau,y)\derive{u}{N}(t-\tau,y)\,d\sigma(y)\,d\tau.
$$
First assume that $\Omega$ has the constant flow property. Then, by assumption, $\derive{u}{N}(t,y)=c(t)$ for all $y\in\bd\Omega$ and for a smooth function $c=c(t)$.  Then:
$$
D\mathcal F_t(\Omega,V)=-\int_0^tc(\tau)c(t-\tau)\,d\tau\cdot\int_{\bd\Omega}\scal{V}{N}d\sigma=0,
$$
because the deformation  preserves the volume (see \eqref{vp}).  Hence $\Omega$ is critical. 

\smallskip

Conversely, assume that $D\mathcal F_t(\Omega,V)=0$ for all deformation vector fields $V$ and for all $t>0$, so that, interchanging the order of integration, we have by assumption:
\begin{equation}\label{zeromean}
\int_{\bd\Omega}\scal{V}{N}(y)B(t,y)\,d\sigma(y)=0
\end{equation}
where we set:
\begin{equation}\label{phi}
B(t,y)\doteq \int_0^t\derive{u}{N}(\tau,y)\derive{u}{N}(t-\tau,y)d\tau.
\end{equation}
As  $\scal{V}{N}$ has zero mean on $\bd\Omega$, and is otherwise arbitrary, we see that \eqref{zeromean} forces $B(t,y)$ to be constant in $y$, that is:
\begin{equation}\label{constant}
B(t,y)=\gamma(t)
\end{equation}
for a smooth (positive) function $\gamma:(0,\infty)\to\reals$. Let  the hat denote Laplace tranform with respect to time $t\in (0,\infty)$. Applied to $u(t,x)$ we obtain
the function 
\begin{equation}\label{uhat}
\hat u(s,x)=\int_0^{\infty}e^{-st}u(t,x)\,dt.
\end{equation}
We can differentiate in the normal direction under the integral sign in \eqref{uhat}, so that, for all $y\in\bd\Omega$ and $s>0$:
\begin{equation}\label{laplacet}
\derive{\hat u}{N}(s,y)=\int_0^{\infty}e^{-st}\derive{u}{N}(t,y)\,dt=\Big(\widehat{\derive uN}\Big)(s,y),
\end{equation}
where on the right we have the Laplace trasform of the heat flow.  We see, from \eqref{phi}, \eqref{constant} and the usual convolution rule:
$$
\widehat\gamma(s)=\hat B(s,y)=\Big(\widehat{\derive uN}\Big)^2(s,y)
$$
This means that
$
\Big(\widehat{\derive uN}\Big)(s,y)=\sqrt{\widehat\gamma(s)}
$
depends only on $s$. Pick two points $y_1,y_2\in\bd\Omega$. From what we have just said and \eqref{laplacet} we have, for all $s>0$:
$$
0=\Big(\widehat{\derive uN}\Big)(s,y_1)-\Big(\widehat{\derive uN}\Big)(s,y_2)=\int_0^{\infty}e^{-st}\Big(\derive{u}{N}(t,y_1)-\derive{u}{N}(t,y_2)\Big)\,dt
$$
and by the injectivity of Laplace transform:
$$
\derive{u}{N}(t,y_1)=\derive{u}{N}(t,y_2)
$$
for all $t>0$ and $y_1,y_2\in\bd\Omega$. Then, the function $\derive uN(t,\cdot)$ is constant on the boundary, and $\Omega$ has the constant flow property.

%%%%

\section{Equivalence between parts b) and c) of Theorem \ref{main}} 

In this section we prove:

\begin{theorem}\label{etm} The domain $\Omega$ is critical for the $k$-th exit time moment, for all $k\geq 1$ and for all volume preserving deformations, if and only if $\Omega$ has the constant flow property.
\end{theorem} 

The proof uses the calculation of the first variation of the $k$-th exit time moment $T_k(\Omega)$ first done \cite{mcD}. For convenience of the reader we derive the same formula, using our approach,  in  Appendix 2 in this paper.

\begin{lemme}\label{eight} The first variation of the $k$-th exit time moment $T_k$, at $\Omega$, in the direction $V$, is given by:
\begin{equation}\label{fvetm}
DT_k(\Omega,V)=-\int_{\bd\Omega}\scal{V}{N}\Phi_k\,dv,
\end{equation}
where
$$
\Phi_k=\sum_{j=1}^kc_{kj}\derive{E_j}{N}\derive{E_{k+1-j}}{N}, \quad c_{kj}=\dfrac{k!}{(k+1-j)! j!}
$$
and $E_k(x)$ is the $k$-th exit time function defined in \eqref{esubk}. In particular, $\Omega$ is critical for the $k$-th exit time moment, for all $k\geq 1$ and for all volume preserving deformations, if and only if $\derive{E_k}{N}$ is constant on the boundary, for all $k$.
\end{lemme}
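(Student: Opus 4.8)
\medskip

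The plan is to avoid a direct variational computation and instead read the formula off from the first variation of the heat content, Theorem \ref{vformula}, through the integral representation \eqref{expressiontk}, $T_k(\Omega)=k\int_0^{\infty}t^{k-1}H_{\Omega}(t)\,dt$. First I would justify differentiating under the $\int_0^{\infty}dt$ sign in $\epsilon$: using the estimate $\derive{u}{N}(t,y)=\frac{1}{\sqrt{\pi t}}+O(1)$ as $t\to 0$ together with the exponential decay of $H_{\Omega}(t)$, and of its $\epsilon$-derivative, as $t\to\infty$ coming from \eqref{largetimes}, the weight $t^{k-1}$ is harmless and everything converges absolutely and uniformly for $\epsilon$ near $0$. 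The cleanest way to make this airtight is to carry the whole computation out with the approximant $v(t,x)=u(t+\delta,x)$ of Section 2 — whose normal derivatives are bounded on $[0,\infty)\times\bd\Omega$ — and to let $\delta\to 0$ only at the very end by dominated convergence, exactly as in the proof of Theorem \ref{vformula}. Granting this, $DT_k(\Omega,V)=k\int_0^{\infty}t^{k-1}H'_{\Omega}(t)\,dt$.

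Then I would insert the formula of Theorem \ref{vformula} and interchange the (absolutely convergent) integrals, so that for each fixed $y\in\bd\Omega$ the factor multiplying $-\scal{V}{N}(y)$ in the boundary integral is
$$
\Phi_k(y)=k\int_0^{\infty}t^{k-1}\int_0^t\derive{u}{N}(\tau,y)\,\derive{u}{N}(t-\tau,y)\,d\tau\,dt.
$$
Changing variables $(\tau,t)\mapsto(\tau,\sigma)$ with $\sigma=t-\tau$ sends the region $\{0<\tau<t\}$ to the quadrant $\{\tau,\sigma>0\}$ with unit Jacobian and $t=\tau+\sigma$, and expanding $(\tau+\sigma)^{k-1}=\sum_{i=0}^{k-1}\binom{k-1}{i}\tau^{i}\sigma^{k-1-i}$ factors the resulting double integral. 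Each factor is identified through \eqref{expression}: differentiating $E_{j}(x)=j\int_0^{\infty}t^{j-1}u(t,x)\,dt$ in the inner normal direction gives $\int_0^{\infty}t^{i}\derive{u}{N}(t,y)\,dt=\frac{1}{i+1}\derive{E_{i+1}}{N}(y)$. Setting $j=i+1$ and using the combinatorial simplification $k\binom{k-1}{j-1}\frac{1}{j(k+1-j)}=\frac{k!}{j!\,(k+1-j)!}=c_{kj}$ yields $\Phi_k=\sum_{j=1}^{k}c_{kj}\derive{E_j}{N}\derive{E_{k+1-j}}{N}$, which is \eqref{fvetm}. (Alternatively one can proceed directly, differentiating the defining system \eqref{esubk}, solving the resulting inhomogeneous Dirichlet problem for $E_k'$, and integrating by parts inductively in $k$ with the Rellich-type identity of Lemma \ref{rellich}; this is the route taken in Appendix 2 and it produces the same $\Phi_k$.)

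For the ``in particular'' assertion, one implication is immediate: if each $\derive{E_k}{N}$ is constant on $\bd\Omega$ then so is each $\Phi_k$, and \eqref{fvetm} together with the volume constraint \eqref{vp} gives $DT_k(\Omega,V)=0$. For the converse, criticality for all volume preserving $V$ means $\int_{\bd\Omega}\scal{V}{N}\Phi_k\,d\sigma=0$ whenever $\scal{V}{N}$ has zero mean on $\bd\Omega$, hence $\Phi_k$ is constant on $\bd\Omega$ for every $k$. I would then induct on $k$. The maximum principle applied to \eqref{esubk} (with $E_0=1>0$) gives $E_k\ge 0$ in $\Omega$, so $\derive{E_k}{N}\ge 0$ on $\bd\Omega$, and Hopf's boundary point lemma gives $\derive{E_1}{N}>0$; since $\Phi_1=(\derive{E_1}{N})^2$ is constant, $\derive{E_1}{N}$ is a positive constant. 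For $k\ge 2$, write $\Phi_k=2\,\derive{E_1}{N}\derive{E_k}{N}+\sum_{j=2}^{k-1}c_{kj}\derive{E_j}{N}\derive{E_{k+1-j}}{N}$; the sum involves only $\derive{E_2}{N},\dots,\derive{E_{k-1}}{N}$ and is therefore constant by the inductive hypothesis, so from constancy of $\Phi_k$ and the fact that $\derive{E_1}{N}$ is a nonzero constant we get that $\derive{E_k}{N}$ is constant. (Combined with a standard moment/Laplace-transform argument as in Section 3 — showing that constancy of the whole family $\{\derive{E_k}{N}\}_{k\ge 1}$ is equivalent to constancy of $\derive{u}{N}(t,\cdot)$ for all $t$ — this then yields Theorem \ref{etm}.)

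The only genuine difficulty I anticipate is analytic: legitimizing the repeated Fubini interchanges and the differentiations under the integral sign near $t=0$, where $\derive{u}{N}(t,\cdot)$ blows up like $t^{-1/2}$; the $\delta$-approximation device of Section 2 removes this obstacle cleanly. The remaining ingredients — the binomial bookkeeping, the maximum principle, and Hopf's lemma — are routine.
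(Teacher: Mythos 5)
Your argument is correct, and for the main formula \eqref{fvetm} it takes a genuinely different route from the paper. The paper (Appendix 2) never touches the heat equation: it differentiates the elliptic hierarchy \eqref{esubk} in $\eps$, iterates Green's formula to convert $\int_\Omega u_k'$ into the bilinear terms $\int_\Omega u_j\Delta' u_{k+1-j}$, and then invokes the Rellich-type identity of Lemma \ref{rellich}, with the interior terms cancelling telescopically; this keeps everything at the level of smooth solutions of Dirichlet problems, so no improper integrals or uniform-in-$\eps$ dominations ever appear, and it makes Lemma \ref{eight} logically independent of Theorem \ref{vformula}. You instead obtain \eqref{fvetm} as a corollary of Theorem \ref{vformula} through the moment representation \eqref{expressiontk}: after the substitution $\sigma=t-\tau$ and the binomial expansion, your identification $\int_0^{\infty}t^{j-1}\derive{u}{N}(t,y)\,dt=\tfrac1j\derive{E_j}{N}(y)$ and the coefficient computation $k\binom{k-1}{j-1}\tfrac{1}{j(k+1-j)}=c_{kj}$ are exactly right, so the algebra closes. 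What your route buys is conceptual economy: it exhibits criticality for $T_k$ as the $t^{k-1}$-moment of criticality for the heat content, which is in the spirit of Section 4. What it costs is the analytic point you yourself flag: exchanging $d/d\eps$ with $\int_0^\infty t^{k-1}\,dt$ needs a domination of $\tfrac{d}{d\eps}H_{\Omega_\eps}(t)$ that is uniform in $\eps$ both near $t=0$ (the $t^{-1/2}$ blow-up of the flow, handled by the $\delta$-approximation) and as $t\to\infty$ (where the decay rate $\lambda_1(\Omega_\eps)$ moves with $\eps$, so one must invoke continuity of the first Dirichlet eigenvalue, or uniform heat-kernel bounds, under the metric perturbation); this should be stated explicitly if you write it up. Finally, your treatment of the ``in particular'' clause is the same induction the paper calls obvious, but spelled out more carefully: the Hopf-lemma positivity of $\derive{E_1}{N}$ is precisely what lets you pass from constancy of $\Phi_1=(\derive{E_1}{N})^2$ to constancy (and nonvanishing) of $\derive{E_1}{N}$, and then divide by it at each inductive step, so that detail is a welcome addition rather than a deviation.
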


\begin{proof} For the proof of \eqref{fvetm} see Proposition 2.1 of \cite{mcD}, or Appendix 2. The last assertion follows from an obvious induction on $k$, observing that
$$
DT_1(\Omega,V)=-\int_{\bd\Omega}\scal{V}{N}\Big(\derive{E_1}{N}\Big)^2 d\sigma.
$$

\end{proof}

%%%

\subsection{Proof of Theorem \ref{etm}} 
Let $\mathcal H_0(\Omega)$ denote the vector space of all functions which are harmonic on $\Omega$ and have zero mean on the boundary:
$$
\mathcal H_0(\Omega)=\{\phi\in C^{\infty}(\Omega): \Delta\phi=0, \int_{\bd\Omega}\phi d\sigma=0\}.
$$

For $\phi\in\mathcal H_0(\Omega)$, we consider the function:
$$
H_{\phi}(t)=\int_{\Omega}\phi(x)u(t,x)\,dv(x),
$$
which is easily seen to represent the heat content of $\Omega$ with initial temperature distribution given by $\phi(x)$ and Dirichlet boundary conditions. 
It is proved in Theorem 9 of \cite{S1} that $\Omega$ has the constant flow property if and only if 
$H_{\phi}\equiv 0$ for all $\phi\in\mathcal H_0(\Omega)$.  An easy Fourier series  argument (see Lemma 8 of \cite{S1})  shows
 that the following upper bound holds for all $t\geq 0$:
$$
\abs{H_{\phi}(t)}\leq \abs{\Omega} \sup_{\Omega} \abs{\phi}\cdot e^{-\lambda_1 t},
$$
where $\lambda_1>0$ is the first Dirichlet eigenvalue of $\Omega$. Therefore, the Laplace transform
$$
\hat H_{\phi}(s)=\int_0^{\infty}e^{-st}H_{\phi}(t)\,dt
$$
is well-defined and analytic in the interval $(-\lambda_1,\infty)$. Moreover, as $H_{\phi}(t)$ is continuous on $[0,\infty)$, we have $H_{\phi}(t)=0$ for all $t$ if and only if $\hat H_{\phi}(s)=0$ for all $s$, by the injectivity of the Laplace transform. 

Let us compute the Taylor series of $\hat H_{\phi}(s)$ at $s=0$. One has 
$$
\begin{aligned}
\hat H_{\phi}(0)&=\int_0^{\infty}H_{\phi}(t)\,dt\\
&=\int_0^{\infty}\int_{\Omega}\phi(x)u(t,x)\,dv(x)dt\\
&=\int_{\Omega}\phi(x)\int_0^{\infty}u(t,x)\,dtdv(x)\\
&=\int_{\Omega}\phi(x)E_1(x)\,dv(x)
\end{aligned}
$$
by \eqref{expression}; more generally, the $k$-th derivative of $\hat H_{\phi}$ at $s=0$ is given by: 
\begin{equation}\label{hath}
\hat H_{\phi}^{(k)}(0)=\dfrac{(-1)^{k}}{k+1}\int_{\Omega}\phi E_{k+1}\,dv
\end{equation}
We will use \eqref{hath} to get the conclusion.
First assume that $\Omega$ has the constant flow property. Then $H_{\phi}\equiv 0$ for all 
$\phi\in\mathcal H_0(\Omega)$ (in particular $\int_{\Omega}\phi=0$) and, from \eqref{hath}:
$$
\int_{\Omega}\phi E_ndv=0
$$
for all $n\geq 0$. Integrating by parts we obtain:
\begin{equation}\label{byparts}
0=\int_{\Omega}\phi E_n\,dv=\dfrac{1}{n+1}\int_{\Omega}\phi\Delta E_{n+1}dv=
\dfrac{1}{n+1}
\int_{\bd\Omega}\phi\derive{E_{n+1}}Nd\sigma.
\end{equation}
for all $n\geq 0$. As any smooth function $\phi$ on $\bd\Omega$ admits an harmonic extension to the interior, we see that $\derive{E_{n+1}}N$ must be orthogonal to the subspace of zero mean functions on $\bd\Omega$, hence it must be  constant on $\bd\Omega$, for all $n\geq 0$. By Lemma \ref{eight} we conclude that $\Omega$ is critical for all $T_k$'s.

\smallskip

Conversely, assume that $\Omega$ is critical for all $T_k$'s. Then all normal derivatives $\derive{E_{k+1}}{N}$ are constant on the boundary of $\Omega$, so that  $\int_{\Omega}\phi E_n dv=0$ for all $n$ (from \eqref{byparts}).  From \eqref{hath}, we see that the Taylor series of $\hat H_{\phi}(s)$ at $s=0$ is zero; as $\hat H_{\phi}$ is analytic on $(-\lambda_1,\infty)$ we must have $\hat H_{\phi}\equiv 0$ hence also $H_{\phi}\equiv 0$. Then $\Omega$ has the constant flow property.

%%%%

%%%%

\section{Appendix 1: proof of Lemma \ref{rellich}}\label{appendixone}

In this section we consider a deformation $f_{\eps}:\Omega\to\Omega_{\eps}$ of the domain $\Omega$ along a vector field $V$ and identify $(\Omega_{\eps}, g)$ with $(\Omega,g_{\eps})$ where $g_{\eps}=f_{\eps}^{\star}g$.
We let $\Delta_{\eps}$ be the Laplacian associated to the metric $g_{\eps}$ of $\Omega$, and $dv_{\eps}$ the corresponding Riemannian measure.  Recall the notation:
$$
g'\doteq\dfrac{d}{d\eps}|_{\eps=0}g_{\eps},\quad \Delta'\doteq\dfrac{d}{d\eps}|_{\eps=0}\Delta_{\eps}, \quad dv'=\dfrac{d}{d\eps}|_{\eps=0}dv_{\eps}.
$$
We want to prove the following expression. 
\begin{lemme} \label{maintech} Assume that  the functions $\phi,\psi:\Omega\to\reals$ vanish on the boundary. Then:
$$
\begin{aligned}
\int_{\Omega}(\phi\Delta'\psi+\psi\Delta' \phi)dv=&
-2\int_{\Omega}\Big(\scal{V}{\nabla \phi}\Delta \psi+\scal{V}{\nabla \psi}\Delta \phi\Big)dv+
\int_{\Omega}(\psi\Delta \phi+\phi\Delta \psi)\delta Vdv\\
&+2\int_{\bd\Omega}\scal{V}{N}\derive \phi N\derive \psi Nd\sigma
\end{aligned}
$$
\end{lemme}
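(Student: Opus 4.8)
The plan is to compute $\Delta'$, the first variation of the Laplacian under the metric deformation $g_\epsilon = f_\epsilon^\star g$, and then integrate the bilinear expression $\phi\Delta'\psi + \psi\Delta'\phi$ against $dv$, using integration by parts and the fact that $\phi,\psi$ vanish on $\bd\Omega$. Recall (cf.\ Lemma \ref{prime}) that $g' = \mathcal L_V g$ and that for a function $w$ one has a formula of the shape
$$
\Delta' w = -\scal{g'}{\nabla^2 w} - \scal{\delta g' + \tfrac12 \nabla(\tr g')}{\nabla w},
$$
where $\nabla^2 w$ is the Hessian; equivalently, since $g' = \mathcal L_V g$, one can write $\Delta' w$ directly in terms of $V$, $\nabla w$ and $\Delta w$. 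The cleanest route is to avoid the full Berger formula and instead use the conformal-type identity: for the volume-form part we already know $dv' = -\delta V\, dv$, and differentiating the weak formulation $\int_\Omega (\Delta_\epsilon \phi)\psi\, dv_\epsilon = -\int_\Omega \scal{\nabla_\epsilon \phi}{\nabla_\epsilon \psi}_\epsilon\, dv_\epsilon$ at $\epsilon = 0$ gives
$$
\int_\Omega (\Delta'\phi)\psi\, dv + \int_\Omega (\Delta\phi)\psi\,(-\delta V)\, dv = -\int_\Omega \scal{\nabla\phi}{\nabla\psi}'\, dv + \int_\Omega \scal{\nabla\phi}{\nabla\psi}\,\delta V\, dv,
$$
valid because $\phi,\psi$ vanish on the boundary (so no boundary term appears from integrating $\Delta_\epsilon\phi$ against $\psi$). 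Here $\scal{\nabla\phi}{\nabla\psi}' = -g'(\nabla\phi,\nabla\psi) = -(\mathcal L_V g)(\nabla\phi,\nabla\psi)$.

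Next I would symmetrize: adding the same identity with $\phi,\psi$ interchanged (the left side is automatically symmetric), we get
$$
\int_\Omega (\phi\Delta'\psi + \psi\Delta'\phi)\, dv = \int_\Omega (\phi\Delta\psi + \psi\Delta\phi)\,\delta V\, dv + 2\int_\Omega (\mathcal L_V g)(\nabla\phi,\nabla\psi)\, dv + \int_\Omega \scal{\nabla\phi}{\nabla\psi}\cdot 2\,\delta V\, dv \cdot(\text{sign bookkeeping}),
$$
so the whole problem reduces to rewriting $\int_\Omega (\mathcal L_V g)(\nabla\phi,\nabla\psi)\, dv$ and $\int_\Omega \scal{\nabla\phi}{\nabla\psi}\,\delta V\, dv$ in terms of the quantities on the right-hand side of the claim. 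For the Lie-derivative term, use $(\mathcal L_V g)(X,Y) = \scal{\nabla_X V}{Y} + \scal{\nabla_Y V}{X}$ with $X = \nabla\phi$, $Y = \nabla\psi$, and then integrate by parts: $\int_\Omega \scal{\nabla_{\nabla\phi} V}{\nabla\psi}\, dv$ becomes, after moving a derivative off $V$, a combination of $\int_\Omega \scal{V}{\nabla\phi}\Delta\psi\, dv$, $\int_\Omega \scal{V}{\nabla\psi}\Delta\phi\, dv$, Hessian cross-terms that cancel by symmetry of $\nabla^2\phi$ and $\nabla^2\psi$, a $\delta V$ term, and a boundary term $\int_{\bd\Omega} \scal{V}{?}\,\cdots\, d\sigma$. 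The key point is that on $\bd\Omega$, since $\phi$ and $\psi$ vanish there, the tangential parts of $\nabla\phi$ and $\nabla\psi$ vanish, so $\nabla\phi = -\derive{\phi}{N}N$ and $\nabla\psi = -\derive{\psi}{N}N$ (with $N$ the inner normal, hence the sign), and all boundary contributions collapse into $2\int_{\bd\Omega}\scal{V}{N}\derive{\phi}{N}\derive{\psi}{N}\, d\sigma$.

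The main obstacle is the careful bookkeeping of signs and of which integrations by parts produce boundary terms: several intermediate boundary integrals involve $\phi$ or $\psi$ (not their normal derivatives) and vanish outright, while exactly one combination survives and must be identified with the normal-derivative product. I would organize this by computing $\delta(\scal{V}{\nabla\phi}\nabla\psi + \scal{V}{\nabla\psi}\nabla\phi - \scal{\nabla\phi}{\nabla\psi}V)$ as a single divergence, integrating it over $\Omega$, and reading off the boundary term via the divergence theorem; the interior terms then reassemble exactly into the claimed right-hand side, using $\Delta\phi = -\delta\nabla\phi$ and the symmetry of the Hessian to cancel the $\scal{\nabla^2\phi}{\nabla V}$-type terms against each other. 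Once the divergence identity is in hand, the lemma follows by substituting and collecting terms; I expect no genuine difficulty beyond this algebra, which is why the excerpt defers it to the appendix.
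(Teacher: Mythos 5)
Your argument is correct and reaches the stated identity, but it gets there by a partly different route than the paper. For the Rellich half you do essentially what the paper does: its Lemma \ref{maintechtwo} is obtained from the pointwise divergence identity for the field $2\scal{X}{\nabla\phi}\nabla\phi-\abs{\nabla\phi}^2X$, and your field $\scal{V}{\nabla\phi}\nabla\psi+\scal{V}{\nabla\psi}\nabla\phi-\scal{\nabla\phi}{\nabla\psi}V$ is just its polarized version (the paper instead proves the quadratic case $\phi=\psi$ throughout and polarizes only at the end). The genuine difference is in the other half: the paper computes $\int_\Omega\phi\Delta'\phi\,dv$ from Berger's explicit formula $\Delta'u=\scal{\nabla^2u}{g'}-\scal{du}{\delta g'}-\tfrac12\scal{du}{d\scal{g}{g'}}$ (Lemma \ref{prime}) followed by Green-formula manipulations (Lemma \ref{maintechone}), whereas you differentiate the $\epsilon$-dependent Dirichlet-form identity $\int_\Omega(\Delta_\epsilon\phi)\psi\,dv_\epsilon=\int_\Omega\scal{d\phi}{d\psi}_{g_\epsilon}\,dv_\epsilon$ (valid for every $\epsilon$ because $\psi$ vanishes on $\bd\Omega$), which uses only $dv'=-\delta V\,dv$ and the fact that the $\epsilon$-derivative of $g_\epsilon^{ij}\partial_i\phi\,\partial_j\psi$ is $-g'(\nabla\phi,\nabla\psi)$, and never needs the formula for $\Delta'$ itself; this is a cleaner, more elementary derivation of the same intermediate identity, at the price of invoking up front the smooth dependence on $\epsilon$ (which the paper has in any case already secured via \cite{RS}). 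Two convention slips to repair when writing it up: in this paper $\Delta=\delta\nabla$ is the positive Laplacian and $\delta V=-{\rm div}\,V$, so the weak identity carries a plus sign rather than the minus you wrote and your later use of $\Delta\phi=-\delta\nabla\phi$ has the wrong sign; and on $\bd\Omega$ one has $\nabla\phi=\derive{\phi}{N}N$ with no minus sign, whichever unit normal is chosen, since $\derive{\phi}{N}=\scal{\nabla\phi}{N}$. Neither slip affects the outcome (the boundary term is quadratic in the normal derivatives), but the final signs in the lemma do depend on these conventions, so the bookkeeping you deferred must be done in the paper's conventions.
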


We first remark that it is enough to prove the lemma when $\phi=\psi$; the general formula will follow by polarization. That is, it is enough to show that, for all functions $\phi$ vanishing on the boundary:
\begin{equation}\label{short}
\int_{\Omega}\phi\Delta'\phi\, dv=
-2\int_{\Omega}\scal{V}{\nabla \phi}\Delta \phi\,dv+\int_{\Omega}\phi\Delta \phi\delta Vdv
+\int_{\bd\Omega}\scal{V}{N}\Big(\derive \phi N\Big)^2\,d\sigma
\end{equation}

In what follows, we denote by $\delta$ the adjoint of the gradient operator, so that 
$
\delta V=-{\rm div V}=-\sum_{j=1}^n\scal{\nabla_{e_j}V}{e_j}
$
where $(e_1,\dots, e_n)$ is any orthonormal frame. We also recall that if  $h$ is a symmetric two-tensor then $\delta h$ is by definition the one-form:
$$
\delta h(X)=-\sum_{j=1}^n\nabla_{e_j}h(e_j,X).
$$

\begin{lemme} \label{prime} Let $g', \Delta'$ and $dv'$ be as above. Let $V$ be the deformation vector field. Then $g'=\mathcal L_Vg$ and one has:
\begin{equation}\label{lemmauno}
g'(X,Y)=g(\nabla_XV,Y)+g(X,\nabla_YV)=\nabla V(X,Y)+\nabla V(Y,X).
\end{equation}
\begin{equation}\label{lemmadue}
\scal{g'}{g}=-2\delta V
\end{equation}
\begin{equation}\label{lemmatre}
dv'=-\delta Vdv
\end{equation}
\begin{equation}\label{deltaprime}
\Delta'u=\scal{\nabla^2 u}{g'}-\scal{du}{\delta g'}-\dfrac 12\scal{du}{d\scal{g}{g'}}
\end{equation}
\end{lemme}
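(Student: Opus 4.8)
The plan is to prove the four displayed identities in order; the first three are bookkeeping, while \eqref{deltaprime} (Berger's formula) carries the computational weight.

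\emph{The identities \eqref{lemmauno}, \eqref{lemmadue}, \eqref{lemmatre}.} Since $f_{\eps}(x)=\exp_x(\eps V(x))$ has $\frac{d}{d\eps}|_{\eps=0}f_{\eps}=V$, the variation $f_{\eps}$ agrees to first order in $\eps$ with the flow of $V$, so $g'=\frac{d}{d\eps}|_{\eps=0}f_{\eps}^{\star}g=\mathcal{L}_Vg$; in particular $\eps\mapsto g_{\eps}$ is smooth, and $\Delta'$, $dv'$ are well defined. To get \eqref{lemmauno} I would differentiate $(f_{\eps}^{\star}g)(X,Y)=g(df_{\eps}X,df_{\eps}Y)$ at $\eps=0$, using the standard fact that the covariant derivative of $df_{\eps}X$ along the variation at $\eps=0$ equals $\nabla_XV$ (symmetry of the connection applied to the map $(\eps,x)\mapsto f_{\eps}(x)$); this gives $g'(X,Y)=g(\nabla_XV,Y)+g(X,\nabla_YV)$, and rewriting $g(\nabla_XV,Y)=\nabla V(X,Y)$ yields \eqref{lemmauno}. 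Tracing \eqref{lemmauno} against $g$ gives $\scal{g'}{g}=\sum_ig'(e_i,e_i)=2\sum_ig(\nabla_{e_i}V,e_i)=2\,{\rm div}\,V=-2\delta V$, which is \eqref{lemmadue}. Finally \eqref{lemmatre} follows from Cartan's formula, $dv'=\mathcal{L}_V(dv)=d(\iota_Vdv)=({\rm div}\,V)\,dv=-\delta V\,dv$ (using that $dv$ is closed), or equivalently from $\frac{d}{d\eps}|_{\eps=0}\sqrt{\det g_{\eps}}=\tfrac12\scal{g}{g'}\sqrt{\det g}$ combined with \eqref{lemmadue}.

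\emph{The identity \eqref{deltaprime}.} Here I would work in local coordinates. Since $u$ is held fixed, $\Delta_{\eps}u=-|g_{\eps}|^{-1/2}\partial_i\big(|g_{\eps}|^{1/2}g_{\eps}^{ij}\partial_ju\big)$ with $|g_{\eps}|=\det g_{\eps}$ (recall the paper's $\Delta=\delta d$ is the nonnegative Laplacian). Differentiating at $\eps=0$, using $\frac{d}{d\eps}|_{\eps=0}g_{\eps}^{ij}=-g^{ik}g^{jl}g'_{kl}$ and $\frac{d}{d\eps}|_{\eps=0}\log\sqrt{|g_{\eps}|}=\tfrac12\scal{g}{g'}$, produces three groups of terms. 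Two of them carry a factor $\Delta u$ (one from the outer $|g_{\eps}|^{-1/2}$, one from the inner $|g_{\eps}|^{1/2}$) and cancel one another, leaving exactly $-\tfrac12\scal{du}{d\scal{g}{g'}}$. The third group is ${\rm div}\,W$, where $W$ is the vector field $g$-dual to the $1$-form $X\mapsto g'(X,\nabla u)$, i.e. $W^i=g^{ik}g^{jl}g'_{kl}\partial_ju$. To evaluate this I would pick a frame $(e_i)$ normal at the point, so that ${\rm div}\,W=\sum_ie_i\big(g'(e_i,\nabla u)\big)=\sum_i\big[(\nabla_{e_i}g')(e_i,\nabla u)+g'(e_i,\nabla_{e_i}\nabla u)\big]$; the first sum equals $-\delta g'(\nabla u)=-\scal{du}{\delta g'}$ by the definition of $\delta$ on symmetric $2$-tensors, and the second equals $\scal{\nabla^2 u}{g'}$ since $\nabla_{e_i}\nabla u=\sum_j\nabla^2 u(e_i,e_j)e_j$. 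Adding the three contributions gives \eqref{deltaprime}.

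\emph{Main obstacle.} The only real difficulty is the sign and index bookkeeping in the last step: one must stay consistent with the geometer's sign of the Laplacian (so $\Delta u=-|g|^{-1/2}\partial_i(\sqrt{|g|}g^{ij}\partial_ju)$), with the sign in $\frac{d}{d\eps}g_{\eps}^{ij}$, and with the conventions $\delta V=-{\rm div}\,V$ and $\delta h(X)=-\sum_i(\nabla_{e_i}h)(e_i,X)$, so that the two $\Delta u$ terms genuinely cancel and the remaining term splits cleanly into the Hessian piece and the $\delta g'$ piece. A more invariant alternative would be to compute the first variation of the Christoffel symbols, $(\Gamma')^k_{ij}=\tfrac12g^{kl}(\nabla_ig'_{jl}+\nabla_jg'_{il}-\nabla_lg'_{ij})$, and to assemble $\Delta'u$ from that together with the term coming from raising the index of $du$ with $g_{\eps}^{-1}$; this is equivalent but needs the same care, so I would prefer the direct route above. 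No analytic subtlety arises, as \eqref{deltaprime} is a pointwise identity between smooth tensor fields.
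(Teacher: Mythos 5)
Your proposal is correct, but note that on this point the paper itself gives no proof: it simply declares \eqref{lemmauno}--\eqref{lemmatre} to be standard and attributes \eqref{deltaprime} to Berger \cite{B}. What you supply is therefore a self-contained derivation rather than a variant of the paper's argument, and it checks out against the paper's specific conventions, which is exactly where such a citation could silently go wrong: with $\Delta=\delta d\geq 0$ (so $\Delta u=-|g|^{-1/2}\partial_i(|g|^{1/2}g^{ij}\partial_j u)$), $\delta V=-{\rm div}\,V$ and $\delta h(X)=-\sum_j(\nabla_{e_j}h)(e_j,X)$, the two terms proportional to $\Delta u$ indeed cancel, the surviving determinant term is $-\tfrac12\scal{du}{d\scal{g}{g'}}$, and your normal-frame computation ${\rm div}\,W=\sum_i\big[(\nabla_{e_i}g')(e_i,\nabla u)+g'(e_i,\nabla_{e_i}\nabla u)\big]=-\scal{du}{\delta g'}+\scal{\nabla^2u}{g'}$ reproduces \eqref{deltaprime} with the correct signs. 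Your justification of the first three identities is also sound: the derivative of $\eps\mapsto f_\eps^{\star}g$ at $\eps=0$ depends only on the velocity field of the deformation, so $g'=\mathcal L_Vg$ even though $f_\eps(x)=\exp_x(\eps V(x))$ is not literally the flow of $V$, and \eqref{lemmadue}, \eqref{lemmatre} follow by tracing and by Cartan's formula (or the determinant expansion). The trade-off is the expected one: the paper's route is a one-line appeal to the literature, while your computation costs a page of index bookkeeping but makes the appendix independent of external sign conventions (e.g.\ with the analyst's Laplacian the right-hand side of \eqref{deltaprime} would change sign), which is precisely the pitfall you identify as the main obstacle.
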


In fact, formulae \eqref{lemmauno}, \eqref{lemmadue}, \eqref{lemmatre} are standard, while \eqref{deltaprime} is due to Berger \cite{B}.

Lemma \eqref{maintech} will drop from the following two facts.

\begin{lemme}\label{maintechone} One has for every $\phi$ vanishing on the boundary:
$$
\int_{\Omega}\phi\Delta'\phi\, dv=-2\int_{\Omega}\nabla V(\nabla \phi,\nabla \phi)\,dv+\frac 12\int_{\Omega}(\Delta \phi^2)\delta V\,dv
$$
\end{lemme}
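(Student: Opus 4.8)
The plan is to verify Lemma \ref{maintechone} by starting from Berger's formula \eqref{deltaprime} with $u=\phi$, multiplying by $\phi$, integrating over $\Omega$, and then integrating by parts the two terms that contain $\delta g'$ and $d\scal{g}{g'}$, using the facts from Lemma \ref{prime} (in particular $\scal{g}{g'}=-2\delta V$ and $g'(X,Y)=\nabla V(X,Y)+\nabla V(Y,X)$, so that $\scal{\nabla^2\phi}{g'}=2\nabla V(\nabla\phi,\nabla\phi)$ after using the symmetry of the Hessian). The three resulting bulk integrals are
$$
\int_\Omega \phi\,\Delta'\phi\,dv = 2\int_\Omega \phi\,\nabla V(\nabla\phi,\nabla\phi)\,dv - \int_\Omega \phi\,\scal{d\phi}{\delta g'}\,dv + \int_\Omega \phi\,\scal{d\phi}{d(\delta V)}\,dv,
$$
and the goal is to massage the last two into the claimed shape. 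The key identities I would invoke are: a Bochner/commutation identity relating $\delta g'$ to $\nabla(\delta V)$ and the Ricci curvature paired with $V$ (this is the standard fact that $\delta(\mathcal L_V g) = -\nabla(\delta V) - \operatorname{div}\nabla V - \operatorname{Ric}(V,\cdot)$ type formula), together with the second Bianchi-type simplification so that the curvature terms telescope; and repeated use of Green's formula, which is where the vanishing of $\phi$ on $\bd\Omega$ gets used to control boundary terms.

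Concretely, I would first handle $-\int_\Omega \phi\,\scal{d\phi}{\delta g'}\,dv$. Writing $\delta g'$ explicitly in terms of $V$ via Lemma \ref{prime}, integrating by parts moves the divergence off $g'$ and onto $\phi\,d\phi = \tfrac12 d(\phi^2)$; the boundary term produced is $\int_{\bd\Omega}\phi\,g'(N,\nabla\phi)\,d\sigma$, which vanishes on the interior of the integrand only partially — here one must be careful, since $\nabla\phi$ need not be normal. The cleanest route is to observe that on $\bd\Omega$, where $\phi=0$, the gradient $\nabla\phi$ is purely normal, $\nabla\phi = \derive\phi N\, N$ (with the sign convention for $N$ the inner normal as in the paper), so $g'(N,\nabla\phi) = \derive\phi N\, g'(N,N) = 2\derive\phi N\,\nabla V(N,N)$; but since $\phi=0$ there the whole boundary term still drops. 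After this integration by parts one is left with bulk terms involving $\nabla^2\phi$ contracted against $\nabla V$, the term $\Delta\phi\,\operatorname{div}V$, and a term with $\nabla V(\nabla\phi,\nabla\phi)$. Then the term $+\int_\Omega \phi\,\scal{d\phi}{d(\delta V)}\,dv = \tfrac12\int_\Omega \scal{d(\phi^2)}{d(\delta V)}\,dv = -\tfrac12\int_\Omega \phi^2\,\Delta(\delta V)\,dv + \tfrac12\int_{\bd\Omega}\phi^2\,\derive{(\delta V)}{N}\,d\sigma$, and again the boundary term vanishes since $\phi=0$ on $\bd\Omega$; alternatively I would integrate by parts the other way to produce $\int_\Omega \delta V\,\delta(\phi\,d\phi)\,dv = \int_\Omega \delta V\,(\abs{\nabla\phi}^2 - \tfrac12\Delta\phi^2)\,dv$... — the precise bookkeeping here is routine but must be done carefully to land exactly on $-2\int_\Omega\nabla V(\nabla\phi,\nabla\phi)\,dv + \tfrac12\int_\Omega(\Delta\phi^2)\,\delta V\,dv$.

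The reconciliation step — checking that all the curvature terms and all the mixed Hessian terms cancel to leave precisely the two terms in Lemma \ref{maintechone} — is where I expect the main obstacle. In particular, the term $\int_\Omega \phi\,\nabla V(\nabla\phi,\nabla\phi)\,dv$ from the Hessian contraction and the term $\int_\Omega \abs{\nabla\phi}^2\,\delta V\,dv$ from the divergence pieces need to combine with an integration by parts of $\int_\Omega\nabla V(\nabla\phi,\nabla\phi)\,dv$ itself (using $\operatorname{div}(\abs{\nabla\phi}^2 V) = \abs{\nabla\phi}^2\operatorname{div}V + 2\nabla^2\phi(\nabla\phi,V)$ and $\operatorname{div}((\scal{V}{\nabla\phi})\nabla\phi) = \ldots$), and the boundary contributions from these must be shown to vanish — which again follows because $\phi=0$ on $\bd\Omega$ forces $\nabla\phi$ normal and forces $\abs{\nabla\phi}^2 V = \abs{\nabla\phi}^2\scal{V}{N}N$ type simplifications, but the terms $\int_{\bd\Omega}\abs{\nabla\phi}^2\scal{V}{N}\,d\sigma = \int_{\bd\Omega}(\derive\phi N)^2\scal{V}{N}\,d\sigma$ do \emph{not} vanish and are precisely what eventually survives once one assembles Lemma \ref{maintech} from Lemma \ref{maintechone} and its (as-yet-unstated) companion. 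So the honest statement is: Lemma \ref{maintechone} should come out with \emph{no} boundary term, and I would double-check the sign conventions (inner vs outer normal, sign of $\Delta$, sign of $\delta$) against the paper's equation \eqref{standard} and the heat equation \eqref{temp} to make sure the bookkeeping is consistent; getting those signs right is the genuinely delicate part, since the Laplacian here is the geometer's (positive) Laplacian $\Delta = \delta d$ but several standard references use the opposite sign.
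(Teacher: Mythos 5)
Your opening reduction contains a genuine error that derails the rest of the plan: you assert that $\scal{\nabla^2\phi}{g'}=2\nabla V(\nabla\phi,\nabla\phi)$. This is false. The pairing $\scal{\nabla^2\phi}{g'}$ is the full contraction of two symmetric $2$-tensors, $\sum_{i,j}\nabla^2\phi(e_i,e_j)\,g'(e_i,e_j)=2\scal{\nabla^2\phi}{\nabla V}$, not the evaluation of $g'$ on the gradient vector. The identity you want, $\scal{d\phi\otimes d\phi}{g'}=g'(\nabla\phi,\nabla\phi)=2\nabla V(\nabla\phi,\nabla\phi)$, holds for the rank-one tensor $d\phi\otimes d\phi$, and in the paper it is only invoked at the very end, after the Hessian term has been transformed. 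Consequently your first display (with the term $2\int_\Omega\phi\,\nabla V(\nabla\phi,\nabla\phi)\,dv$) is already wrong, and the cancellations you hope for downstream would not occur as described. Your expression $\delta(\phi\,d\phi)=\abs{\nabla\phi}^2-\tfrac12\Delta\phi^2$ is also off under the paper's conventions ($\Delta=\delta d$, $\delta V=-\mathrm{div}\,V$): one has $\delta(\phi\,d\phi)=\phi\Delta\phi-\abs{d\phi}^2=\tfrac12\Delta\phi^2$.

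The second, structural, gap is that your route through an explicit formula for $\delta g'=\delta(\mathcal L_Vg)$ (with $\mathrm{Ric}(V,\cdot)$ terms that you hope "telescope") is never carried out, and you yourself flag the reconciliation as the main obstacle. The paper never computes $\delta g'$ at all: the trick is to keep $\delta g'$ abstract and integrate by parts the other way, using the adjointness $\int_\Omega\scal{\nabla^2(\phi^2)}{g'}\,dv=\int_\Omega\scal{d(\phi^2)}{\delta g'}\,dv$ (the boundary term vanishes because $\nabla(\phi^2)=2\phi\nabla\phi=0$ on $\bd\Omega$), together with the product rule $\nabla^2(\phi^2)=2\phi\nabla^2\phi+2\,d\phi\otimes d\phi$. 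This combines the Hessian term and the $\delta g'$ term of Berger's formula into $-\int_\Omega\scal{d\phi\otimes d\phi}{g'}\,dv=-2\int_\Omega\nabla V(\nabla\phi,\nabla\phi)\,dv$, while the $d\scal{g}{g'}$ term is handled exactly as you propose, giving $\tfrac12\int_\Omega(\Delta\phi^2)\,\delta V\,dv$; no curvature ever enters. As it stands, your proposal is a sketch whose one concrete computation is incorrect, so it does not establish the lemma; adopting the $\nabla^2(\phi^2)$ device would both fix the error and remove the need for any Bochner-type identity.
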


The next one is a Rellich-type identity.

\begin{lemme}\label{maintechtwo} Let $X$ be any vector field on $\Omega$ and assume that $\phi$ vanishes on $\bd\Omega$. Then:
$$
2\int_{\Omega}\nabla X(\nabla \phi,\nabla \phi)\,dv=
\int_{\Omega}\Big(2\scal{X}{\nabla \phi}\Delta \phi-\abs{\nabla\phi}^2\delta X\Big)dv
-\int_{\bd\Omega}\scal{X}{N}\Big(\derive \phi N\Big)^2\,d\sigma
$$
\end{lemme}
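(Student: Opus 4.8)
The statement to prove is Lemma~\ref{maintechtwo}, a Rellich-type integral identity: for any vector field $X$ on $\Omega$ and any $\phi$ vanishing on $\bd\Omega$,
$$
2\int_{\Omega}\nabla X(\nabla \phi,\nabla \phi)\,dv=
\int_{\Omega}\Big(2\scal{X}{\nabla \phi}\Delta \phi-\abs{\nabla\phi}^2\delta X\Big)dv
-\int_{\bd\Omega}\scal{X}{N}\Big(\derive \phi N\Big)^2\,d\sigma.
$$

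\smallskip

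The plan is to integrate by parts, starting from the divergence of a well-chosen vector field. The natural candidate is $W=\scal{X}{\nabla\phi}\nabla\phi$, whose divergence expands, by the Leibniz rule, into $\scal{\nabla(\scal{X}{\nabla\phi})}{\nabla\phi}+\scal{X}{\nabla\phi}\,\mathrm{div}(\nabla\phi)$. The second term is just $\scal{X}{\nabla\phi}\,\Delta\phi$ up to the sign convention for $\Delta$ used in the paper (here $\Delta=\delta\circ\nabla$, so $\mathrm{div}(\nabla\phi)=-\Delta\phi$; I will be careful to track this sign throughout). The first term is where the Hessian of $\phi$ and the covariant derivative of $X$ enter: writing $\scal{X}{\nabla\phi}=\sum_i X^i\,\partial_i\phi$ in a local orthonormal frame and differentiating, one gets $\nabla X(\nabla\phi,\nabla\phi)+\nabla^2\phi(X,\nabla\phi)$, where $\nabla^2\phi$ is the Hessian. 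So far this produces the $\nabla X(\nabla\phi,\nabla\phi)$ term we want, but with an unwanted Hessian term $\nabla^2\phi(X,\nabla\phi)$ attached.

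\smallskip

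To eliminate that Hessian term I would use a second integration by parts, this time on the function $\abs{\nabla\phi}^2$: note $\scal{X}{\nabla(\abs{\nabla\phi}^2)}=2\nabla^2\phi(X,\nabla\phi)$, so $\nabla^2\phi(X,\nabla\phi)=\tfrac12\scal{X}{\nabla(\abs{\nabla\phi}^2)}$, and integrating $\int_\Omega \scal{X}{\nabla(\abs{\nabla\phi}^2)}\,dv$ by parts (Green's formula / the divergence theorem) gives $\int_{\bd\Omega}\abs{\nabla\phi}^2\scal{X}{N}\,d\sigma$ — note $N$ is the \emph{inner} normal, hence a sign — plus $\int_\Omega \abs{\nabla\phi}^2\,\delta X\,dv$. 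Then I assemble the two computations. Applying the divergence theorem to $W$ itself yields a boundary term $\int_{\bd\Omega}\scal{X}{\nabla\phi}\scal{\nabla\phi}{N}\,d\sigma$; since $\phi$ vanishes on $\bd\Omega$, its tangential gradient vanishes there, so $\nabla\phi = \derive{\phi}{N}\,N$ on $\bd\Omega$, and this boundary term becomes $\int_{\bd\Omega}\scal{X}{N}\bigl(\derive{\phi}{N}\bigr)^2\,d\sigma$ (up to the inner-normal sign). Collecting all pieces, the interior Hessian terms cancel and one is left exactly with the claimed identity.

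\smallskip

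The main obstacle here is purely bookkeeping rather than conceptual: I must be scrupulously consistent with two sign conventions that the paper fixes — first, $\Delta=\delta\nabla=-\mathrm{div}\,\nabla$ (so $\Delta$ is the positive Laplacian), and second, that $N$ is the \emph{inner} unit normal, so Green's formula $\int_\Omega(\psi\,\delta Y - \scal{d\psi}{Y}) \, dv$ picks up a boundary term $-\int_{\bd\Omega}\psi\scal{Y}{N}\,d\sigma$ with a minus sign relative to the outward-normal version. Getting either of these wrong flips the sign of the boundary integral in the final formula, which would be fatal since this lemma feeds directly (via Lemma~\ref{maintechone} and the trace identity $\nabla X(\nabla\phi,\nabla\phi)=\tfrac12 g'(\nabla\phi,\nabla\phi)$ from \eqref{lemmauno}) into Lemma~\ref{maintech}, and the $+2\int_{\bd\Omega}\scal{V}{N}\derive{\phi}{N}\derive{\psi}{N}\,d\sigma$ term there is precisely what produces the constant-flow characterization. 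I would double-check the signs by testing on a trivial case, e.g. $X = \nabla\phi$ on a Euclidean ball, or by comparing with the classical Rellich–Pohozaev identity.
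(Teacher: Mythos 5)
Your proposal is correct and is essentially the paper's own argument: the paper simply packages your two integrations by parts into a single pointwise identity for the divergence of the combined field $2\scal{X}{\nabla \phi}\nabla \phi-\abs{\nabla\phi}^2X$, with the same cancellation of the Hessian terms $\nabla^2\phi(X,\nabla\phi)$ and the same use of $\nabla\phi=\derive{\phi}{N}N$ on $\bd\Omega$. The sign bookkeeping you flag works out exactly as you describe, so the two write-ups differ only in organization, not in substance.
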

Lemma \ref{maintechone} and Lemma \ref{maintechtwo} will be proved below. 
%%%

\subsection{Proof of \eqref{short}}
It follows by taking $X=V$ in Lemma \ref{maintechtwo} and by inserting the resulting identity in Lemma \ref{maintechone}.

%%%

\subsection{Proof of Lemma \ref{maintechone}} 

From \eqref{deltaprime} we see that:
$$
\int_{\Omega}\phi\Delta'\phi\,dv=\int_{\Omega}\Big(\scal{\phi\nabla^2 \phi}{g'}-\scal{\phi d\phi}{\delta g'}-\dfrac 12\scal{\phi d\phi}{d\scal{g}{g'}}\Big)\,dv
$$
Now, by Green formula and \eqref{lemmadue}:
$$
\begin{aligned}
-\int_{\Omega}\dfrac 12\scal{\phi d\phi}{d\scal{g}{g'}}\,dv&=\int_{\Omega}\scal{\phi d\phi}{d\delta V}\,dv\\
&=\int_{\Omega}\delta(\phi d\phi)\delta V\,dv\\
&=\int_{\Omega}\Big(\phi\Delta \phi-\abs{d\phi}^2\Big)\delta V\,dv\\
&=\frac 12\int_{\Omega}\Delta\phi^2\delta V\,dv\\
\end{aligned}
$$
Hence:
\begin{equation}\label{uno}
\begin{aligned}
\int_{\Omega}\phi\Delta'\phi\,dv=\int_{\Omega}\scal{\phi\nabla^2 \phi}{g'}\,dv-
\frac 12 \int_{\Omega}\scal{d\phi^2}{\delta g'}dv+
\frac 12\int_{\Omega}\Delta\phi^2\delta V\,dv\\
\end{aligned}
\end{equation}
Taking into account that
\begin{equation}
\nabla^2(\phi^2)=2\phi\nabla^2\phi+2d\phi\otimes d\phi
\end{equation}
and that, by Green formula and the fact that $\phi$ vanishes on the boundary:
\begin{equation}
\int_{\Omega}\scal{\nabla^2(\phi^2)}{g'}dv=\int_{\Omega}\scal{d\phi^2}{\delta g'}dv
\end{equation}
we see that:
\begin{equation}\label{due}
\int_{\Omega}\scal{\phi\nabla^2\phi}{g'}dv-\frac 12 \int_{\Omega}\scal{d\phi^2}{\delta g'}dv=-\int_{\Omega}\scal{d\phi\otimes d\phi}{g'}dv
\end{equation}
and then, substituting \eqref{due} in \eqref{uno}:
$$
\int_{\Omega}\phi\Delta'\phi\,dv=-\int_{\Omega}\scal{d\phi\otimes d\phi}{g'}dv+\frac 12\int_{\Omega}\Delta\phi^2\delta V\,dv.
$$
We now observe that
$$
\scal{d\phi\otimes d\phi}{g'}=g'(\nabla \phi,\nabla \phi)=2\nabla V(\nabla \phi,\nabla \phi)
$$
and we arrive at the final expression.

%%%

\subsection{Proof of Lemma \ref{maintechtwo}}

For the proof, we start from the pointwise identity:
\begin{equation}\label{identity}
\delta\Big(2\scal{X}{\nabla \phi}\nabla \phi-\abs{\nabla\phi}^2X\Big)=-2\nabla X(\nabla \phi,\nabla \phi)
+2\scal{X}{\nabla \phi}\Delta \phi-\abs{\nabla\phi}^2\delta X
\end{equation}
To see it, first observe that
\begin{equation}\label{primo}
\begin{aligned}
\delta(2\scal{X}{\nabla\phi}\nabla\phi)&=-2\nabla\phi\cdot\scal{X}{\nabla\phi}+2\scal{X}{\nabla\phi}\Delta\phi\\
&=-2\scal{\nabla_{\nabla\phi}X}{\nabla\phi}
-2\scal{X}{\nabla_{\nabla\phi}\nabla\phi}+2\scal{X}{\nabla\phi}\Delta\phi\\
&=-2\nabla X(\nabla\phi,\nabla\phi)-2\nabla^2\phi(\nabla\phi,X)+2\scal{X}{\nabla\phi}\Delta\phi
\end{aligned}
\end{equation}

and that:
\begin{equation}\label{secondo}
\begin{aligned}
\delta(-\abs{\nabla\phi}^2X)&=2\scal{\nabla_X\nabla\phi}{\nabla\phi}-\abs{\nabla\phi}^2\delta X\\
&=2\nabla^2\phi(X,\nabla\phi)-\abs{\nabla\phi}^2\delta X
\end{aligned}
\end{equation}
so \eqref{identity} follows immediately by adding \eqref{primo} and \eqref{secondo}
(recall that $\nabla^2\phi$ is symmetric).

Lemma \ref{maintechtwo} follows integrating \eqref{identity}  over $\Omega$ and using the Green formula, also recalling that, since $\phi$  vanish on the boundary, we have
$
\nabla\phi=\derive{\phi}{N}N
$
on $\bd\Omega$.

\section{Appendix 2: proof of Lemma \ref{eight}} We want to prove the following fact.

\begin{lemme}\label{eight} The first variation of the $k$-th exit time moment $T_k$, at $\Omega$, in the direction $V$, is given by:
\begin{equation}\label{fvetm}
DT_k(\Omega,V)=-\int_{\bd\Omega}\scal{V}{N}\Phi_k\,d\sigma,
\end{equation}
where
$$
\Phi_k=\sum_{j=1}^kc_{kj}\derive{E_j}{N}\derive{E_{k+1-j}}{N}, \quad c_{kj}=\dfrac{k!}{(k+1-j)! j!}
$$
and $E_k(x)$ is the $k$-th exit time function defined in \eqref{esubk}. In particular, $\Omega$ is critical for the $k$-th exit time moment, for all $k\geq 1$ and for all volume preserving deformations, if and only if $\derive{E_k}{N}$ is constant on the boundary, for all $k$.
\end{lemme}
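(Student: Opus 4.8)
The plan is to obtain \eqref{fvetm} as a direct consequence of the first variation formula for the heat content (Theorem \ref{vformula}) via the integral identity \eqref{expressiontk}, namely $T_k(\Omega)=k\int_0^\infty t^{k-1}H_\Omega(t)\,dt$. Applying this identity to the deformed domains $\Omega_\eps$ and differentiating at $\eps=0$ — the interchange of $\frac{d}{d\eps}|_{\eps=0}$ with $\int_0^\infty dt$ being legitimate thanks to the uniform-in-$\eps$ exponential decay of $H_{\Omega_\eps}(t)$ for large $t$ (recall \eqref{largetimes} and that $\lambda_1(\Omega_\eps)$ stays bounded away from $0$ for small $\eps$) together with a matching bound on its $\eps$-derivative — gives
$$
DT_k(\Omega,V)=k\int_0^\infty t^{k-1}\,D\mathcal F_t(\Omega,V)\,dt.
$$
Substituting the expression of $D\mathcal F_t(\Omega,V)$ from Theorem \ref{vformula} and using Fubini (legitimate since the time–convolution $\int_0^t\derive uN(\tau,y)\derive uN(t-\tau,y)\,d\tau$ stays bounded as $t\to0$ — by the estimate $\derive uN(t,y)=\frac1{\sqrt{\pi t}}+O(1)$ it tends to $\frac1\pi\int_0^1\frac{ds}{\sqrt{s(1-s)}}=1$ — and decays exponentially as $t\to\infty$), and then changing variables $(\tau,t)\mapsto(\tau,s)$ with $s=t-\tau$ (Jacobian $1$), one is left with
$$
DT_k(\Omega,V)=-k\int_{\bd\Omega}\scal VN(y)\Big(\int_0^\infty\!\!\int_0^\infty(\tau+s)^{k-1}\,\derive uN(\tau,y)\,\derive uN(s,y)\,d\tau\,ds\Big)\,d\sigma(y).
$$

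Next I would expand $(\tau+s)^{k-1}=\sum_{i=0}^{k-1}\binom{k-1}{i}\tau^i s^{k-1-i}$, which splits the inner double integral into $\sum_{i=0}^{k-1}\binom{k-1}{i}\big(\int_0^\infty\tau^i\derive uN(\tau,y)\,d\tau\big)\big(\int_0^\infty s^{k-1-i}\derive uN(s,y)\,ds\big)$. The key point is the identity
$$
\int_0^\infty\tau^i\,\derive uN(\tau,y)\,d\tau=\frac1{i+1}\,\derive{E_{i+1}}N(y),\qquad y\in\bd\Omega ,
$$
which follows from the representation \eqref{expression}, $E_{i+1}(x)=(i+1)\int_0^\infty\tau^i u(\tau,x)\,d\tau$: since $E_{i+1}$ extends smoothly to $\bar\Omega$ and vanishes on $\bd\Omega$, one has $\derive{E_{i+1}}N(y)=\lim_{h\to0^+}\frac1h E_{i+1}(y+hN)$, and the integrability of $\tau^i\derive uN(\tau,\cdot)$ on $(0,\infty)$ (from the small-time estimate at $\tau=0$ and exponential decay at infinity) lets one pass this limit under the integral. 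Substituting, setting $j=i+1$, and collecting the coefficient of $\derive{E_j}N\derive{E_{k+1-j}}N$, one gets $\dfrac{k\binom{k-1}{j-1}}{j(k+1-j)}$, which simplifies to $\dfrac{k!}{j!\,(k+1-j)!}=c_{kj}$; this is precisely \eqref{fvetm} with $\Phi_k=\sum_{j=1}^k c_{kj}\derive{E_j}N\derive{E_{k+1-j}}N$.

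The last assertion follows by induction on $k$. For $k=1$, formula \eqref{fvetm} reads $DT_1(\Omega,V)=-\int_{\bd\Omega}\scal VN\big(\derive{E_1}N\big)^2\,d\sigma$; since $\scal VN$ ranges over all smooth functions of zero mean on $\bd\Omega$ (condition \eqref{vp}), this vanishes for every volume preserving deformation if and only if $\big(\derive{E_1}N\big)^2$ is constant on $\bd\Omega$, equivalently — as $E_1>0$ in $\Omega$ and vanishes on $\bd\Omega$, so $\derive{E_1}N$ has constant sign there — if and only if $\derive{E_1}N$ is a (positive) constant. Assuming now $\derive{E_1}N,\dots,\derive{E_{k-1}}N$ constant, in $\Phi_k$ the two extreme terms ($j=1$ and $j=k$) add up to $2c_{k1}\derive{E_1}N\derive{E_k}N$, with $c_{k1}=c_{kk}=1$, while every other term involves only $E_2,\dots,E_{k-1}$ and is therefore constant by the inductive hypothesis; since $\derive{E_1}N$ is a nonzero constant, $\Phi_k$ is constant if and only if $\derive{E_k}N$ is, and $DT_k(\Omega,\cdot)$ vanishes on all volume preserving deformations if and only if $\Phi_k$ is constant. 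This closes the induction.

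The only genuine obstacles are the two interchanges of order of integration (differentiation under $\int_0^\infty dt$, and the Fubini step) and the boundary limit in the displayed identity for $\int_0^\infty\tau^i\derive uN$, all of which rest on the exponential decay of the heat content for large $t$ and the $O(t^{-1/2})$ blow-up of the heat flow as $t\to0^+$; beyond that the computation is the elementary binomial identity collapsing $\frac{k\binom{k-1}{j-1}}{j(k+1-j)}$ to $c_{kj}$. One could alternatively mimic the proof of Theorem \ref{vformula} directly at the level of $E_k$, using Lemma \ref{rellich} with $f=E_j$, $h=E_{k+1-j}$ and the recursion $\Delta E_m=mE_{m-1}$, but that route is computationally heavier.
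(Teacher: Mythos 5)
Your proposal is correct, but it takes a genuinely different route from the paper's Appendix~2. The paper works entirely at the elliptic level: setting $u_k=E_k/k!$, it first iterates Green's formula with the recursion $\Delta u_k=u_{k-1}$ to reduce the variation of $\int_\Omega u_k$ to the sum $-\tfrac12\sum_j\int_\Omega\big(u_j\Delta'u_{k+1-j}+u_{k+1-j}\Delta'u_j\big)\,dv+\int_\Omega u_k\,dv'$, and then applies the Rellich-type Lemma \ref{rellich} with $f=u_j$, $h=u_{k+1-j}$, the interior terms cancelling by a telescoping argument; no time integrals appear, so there are no convergence issues to address (this is exactly the alternative you mention in your last sentence). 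You instead deduce \eqref{fvetm} as a corollary of Theorem \ref{vformula} through the moment identity \eqref{expressiontk}: the change of variables $s=t-\tau$ turns the time convolution into a double integral of $(\tau+s)^{k-1}$, the binomial expansion splits it into products of moments of the heat flow, and the identity $\int_0^\infty\tau^{j-1}\derive{u}{N}(\tau,y)\,d\tau=\tfrac1j\derive{E_j}{N}(y)$ (differentiating \eqref{expression} at the boundary) yields the coefficients, with $\tfrac{k\binom{k-1}{j-1}}{j(k+1-j)}=c_{kj}$ checking out. What your route buys is transparency: it explains the combinatorial constants $c_{kj}$ as binomial coefficients and makes the equivalence of criticality for the heat content and for the moment spectrum almost tautological; what it costs is the analytic bookkeeping you flag — Fubini/Tonelli (fine, since the heat flow is positive, $O(\tau^{-1/2})$ at $0$ and exponentially decaying), the boundary limit under the integral (needs a dominating bound such as $|\nabla u(\tau,\cdot)|\le C\tau^{-1/2}$ near $\bd\Omega$, not just integrability of the limit), and above all the differentiation under $\int_0^\infty dt$, where the uniform-in-$\eps$ exponential bound on $\partial_\eps H_{\Omega_\eps}(t)$ is asserted rather than proved; it does hold (uniform parabolic estimates for the smoothly varying metrics $g_\eps$, or the variation formula applied at each $\Omega_\eps$), but it is the one step requiring genuine extra work that the paper's elliptic argument avoids entirely. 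Your explicit induction for the last assertion, using $c_{k1}=c_{kk}=1$ and the strict positivity of $\derive{E_1}{N}$, is precisely the ``obvious induction'' the paper leaves implicit, and is correct.
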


It is enough to show \eqref{fvetm}, as the last assertion will follow from an obvious inductive argument.

\smallskip

It will be simpler to work with the functions $u_k=\frac{1}{k!}E_k$ which satisfy 
$u_0=1$ and
$$
\twosystem
{\Delta u_k=u_{k-1}\qtq\Omega}
{u_k=0\qtq\bd\Omega}
$$
for $k\geq 1$. Proving \eqref{fvetm} is in turn equivalent to proving the following fact. Let $Q_k(\Omega)$ be the functional
\begin{equation}\label{intform}
Q_k(\Omega)=\int_{\Omega} u_k\,dv.
\end{equation}
Then:
\begin{equation}\label{nduk}
Q'_k(\Omega)\doteq DQ_k(\Omega,V)=-\int_{\bd\Omega}\scal{V}{N}\Psi_k\,d\sigma, \quad\text{where}\quad \Psi_k=\sum_{j=1}^k\derive{u_j}{N}\derive{u_{k+1-j}}{N}.
\end{equation}
We then proceed to show \eqref{nduk}.

\smallskip

{\bf Step 1.} We first prove:
\begin{equation}\label{primopasso}
DQ_k(\Omega,V)=-\frac 12\sum_{j=1}^{k}\int_{\Omega}\Big(u_j\Delta' u_{k+1-j}+u_{k+1-j}\Delta' u_j\Big)\,dv+\int_{\Omega}u_kdv'.
\end{equation}
For the proof, we start differentiating \eqref{intform}:
\begin{equation}\label{diff}
Q_k'=\int_{\Omega}u_k'\,dv+\int_{\Omega}u_k\,dv'.
\end{equation}
We take care of the first term. One has:
$$
\begin{aligned}
\int_{\Omega}u_k'\,dv&=\int_{\Omega}u_k'\Delta u_1\,dv\\
&=\int_{\Omega}u_1\Delta u_k'\,dv
\end{aligned}
$$
Differentiate the identity $\Delta u_k=u_{k-1}$ and obtain:
$$
\Delta' u_k+\Delta u_k'=u_{k-1}'.
$$
Recalling that $u_1$ and $u_k$ vanish on the boundary, we see by Green formula and the above:
\begin{equation}\label{fit}
\int_{\Omega}u_k'\,dv=-\int_{\Omega}u_1\Delta' u_k\,dv+\int_{\Omega}u_1u'_{k-1}\,dv.
\end{equation}
This is the first iteration. We now take care of the second integral, in a similar way:
$$
\begin{aligned}
\int_{\Omega}u_1u'_{k-1}\,dv&=\int_{\Omega}\Delta u_2\cdot u'_{k-1}\,dv\\
&=\int_{\Omega}u_2\Delta u'_{k-1}\,dv\\
&=-\int_{\Omega}u_2\Delta' u_{k-1}dv+\int_{\Omega}u_2u'_{k-2}\,dv.
\end{aligned}
$$
After two iterations we then have:
$$
\int_{\Omega}u_k'\,dv=-\int_{\Omega}\Big(u_1\Delta' u_k+u_2\Delta'u_{k-1}\Big)\,dv+\int_{\Omega}u_2u'_{k-2}\,dv.
$$
It is clear how to continue. The process stops at step $k$, because $u'_0=0$. Finally we get:
\begin{equation}\label{stepk}
\begin{aligned}
\int_{\Omega}u_k'\,dv&=-\sum_{j=1}^{k}\int_{\Omega}
u_j\Delta' u_{k+1-j}\,dv\\
&=-\frac 12\sum_{j=1}^{k}\int_{\Omega}
\Big(u_j\Delta' u_{k+1-j}+u_{k+1-j}\Delta'u_j\Big)\,dv
\end{aligned}
\end{equation}
Then \eqref{primopasso} follows from \eqref{stepk} and \eqref{diff}.  

\smallskip

{\bf Step 2.} Taking into account \eqref{primopasso}, to prove the final statement it is enough to show that
\begin{equation}\label{showit}
\begin{aligned}
-\sum_{j=1}^k&\int_{\Omega}(u_j\Delta' u_{k+1-j}+u_{k+1-j}\Delta' u_{j})\,dv\\
&=2\int_{\Omega}u_k\delta V\,dv-2\sum_{j=1}^k\int_{\bd\Omega}\scal{V}{N}\derive{u_j}{N}\derive{u_{k+1-j}}{N}\,d\sigma.
\end{aligned}
\end{equation}
Once this identity has been established, formula \eqref{nduk} follows because
$
\int_{\Omega}u_kdv'_g=-\int_{\Omega}u_k\delta V\,dv
$
From Lemma \ref{maintech} we see that:
$$
\begin{aligned}
-\int_{\Omega}&(u_j\Delta' u_{k+1-j}+u_{k+1-j}\Delta' u_{j})\,dv\\
&=\int_{\Omega}\Big(2\scal{V}{\nabla u_j}\Delta u_{k+1-j}+
2\scal{V}{\nabla u_{k+1-j}}\Delta u_{j}\Big)\,dv\quad\label{A(j)}\\
&-\int_{\Omega}\Big(u_j\Delta u_{k+1-j}+u_{k+1-j}\Delta u_j\Big)\delta V\,dv\quad\label{B(j)}\\
&-2\int_{\bd\Omega}\scal{V}{N}\derive{u_j}{N}\derive{u_{k+1-j}}{N}\,d\sigma.
\end{aligned}
$$
The sum of the two inner integrals, thanks to the defining relations, and after rearranging terms, equals:
$$
\begin{aligned}
A(j)+B(j)&=\int_{\Omega}\Big(2\scal{V}{\nabla {u_j}}u_{k-j}-u_ju_{k-j}\delta V\Big)\,dv\quad\label{C(j)}\\
&+\int_{\Omega}\Big(2\scal{V}{\nabla {u_{k+1-j}}}u_{j-1}-u_{k+1-j}u_{j-1}\delta V\Big)\,dv\quad\label{D(j)}
\end{aligned}
$$
We now sum over $j=1,\dots,k$. One sees that if $\psi=\sum_{j=1}^{k-1}u_ju_{k-j}$, then 
$$
\sum_{j=1}^{k-1}C(j)=\int_{\Omega}\Big(\scal{V}{\nabla\psi}-\psi\delta V\Big)dv=0
$$
by the Green formula, since $\psi$ vanishes on the boundary.
Hence the only term surviving in the sum is when $j=k$, which implies that:
$$
\sum_{j=1}^kC(j)=C(k)=\int_{\Omega}\Big(2\scal{V}{\nabla u_k}-u_k\delta V\Big)\,dv=\int_{\Omega}u_k\delta V\,dv.
$$
Similarly one sees that $\sum_{j=2}^kD(j)=0$, the only term surviving in the second sum is when $j=1$, and then:
$$
\sum_{j=1}^kD(j)=D(1)=\int_{\Omega}\Big(2\scal{V}{\nabla u_k}-u_k\delta V\Big)\,dv=\int_{\Omega}u_k\delta V\,dv.
$$
The last two identities imply that 
$$
\sum_{j=1}^k(A(j)+B(j))=\sum_{j=1}^kC(j)+\sum_{j=1}^kD(j)=2\int_{\Omega}u_k\delta V\,dv,
$$
and \eqref{showit} holds. 

%%%%

\section{Appendix 3: proof of Theorem \ref{inthesphere}}
We first make some preliminary considerations and prove a lemma. Recall that every isoparametric hypersurface is a regular level set of the restriction to $\sphere n$ 
of a  Cartan polynomial in $\real{n+1}$ (as defined in \eqref{cmp}): denote such restriction by $V$. Then, $V$ satisfies (see for example \cite{Shk}):
\begin{equation}\label{CM}
\twosystem
{\abs{\nabla V}^2=g^2(1-V^2)}
{\Delta V=g(g+n-1)V-c}
\end{equation}
where $\nabla$ and $\Delta$ denote the gradient and the Laplacian of $\sphere n$. Now $V$ takes values in $[-1,1]$; every level set $V^{-1}(t)$ is a (regular) isoparametric hypersurface for $t\in (-1,1)$ while $\Sigma_+=V^{-1}(1)$ and 
$\Sigma_-=V^{-1}(-1)$ are the focal submanifolds. 
\smallskip

We now focus on the focal submanifold $\Sigma_+=f^{-1}(1)$, and let 
$\rho$ be the distance function to $\Sigma_+$:
$$
\rho(x)=d(x, \Sigma_+).
$$
$V$ and $\rho$ are related  by the formula:
\begin{equation}\label{vrho}
V(x)=\cos(g\rho(x)),
\end{equation}
so that $\rho$ maps onto $[0,\frac{\pi}g]$; note that $\rho$ is smooth on the complement of the focal set.  It is well-known that, if $x$ is a regular point of $\rho$, then  $\Delta\rho(x)$ is the mean curvature (i.e. trace of the second fundamental form) of the equidistant $\rho^{-1}(\rho(x))$ through $x$, with respect to the unit normal vector $\nabla\rho(x)$. 
We have the following fact.

\begin{lemme} At all regular points of $\rho$ (hence, on the complement of the focal set) one has:
$$
\Delta\rho=-(n-1)\cot(g\rho)+
\frac{c}{g\sin(g\rho)},
$$
where $c$ is as in \eqref{defc}.
\end{lemme}

\begin{proof} Equation \eqref{vrho} says that $V=\psi\circ\rho$ where $\psi(r)=\cos(gr)$. Knowing that $\abs{\nabla\rho}=1$, we see:
$$
\Delta V=-\psi''\circ\rho+(\psi'\circ\rho)\Delta\rho=g^2\cos(g\rho)-g\sin(g\rho)\Delta\rho.
$$
Hence, by \eqref{CM}:
$$
g^2\cos(g\rho)-g\sin(g\rho)\Delta\rho=g(g+n-1)\cos(g\rho)-c,
$$
which gives the assertion.
\end{proof}
We can now prove the theorem:

\begin{thm} Let $\Omega$ be a domain in $\sphere n$. Then $\Omega$ is an isoparametric tube if and only if :

\smallskip

a) either $\Omega$ is bounded by a connected isoparametric hypersurface,

\smallskip

b) or $\Omega$ is a tube around a minimal isoparametric hypersurface $\Sigma$ such that all its distinct principal curvatures have the same multiplicity (that is, $\Sigma$ is minimal with $c=0$).
\end{thm} 

\begin{proof} First, assume that $\Omega$ is an isoparametric tube: we know that then $\bd\Omega$ can have at most two components, each being an isoparametric hypersurface.  

\smallskip

If the boundary is connected we have a). 

\smallskip

If the boundary has two components then the soul has to be a minimal isoparametric hypersurface $\Sigma$ and we  just need to show that $c=0$. 

\smallskip

We fix the focal submanifold $\Sigma_+$ of the foliation, as above, and let $\rho$ be the distance function to $\Sigma_+$. The mean curvature of $\rho^{-1}(r)$, with respect to the normal vector $\nabla\rho$, is radial, that is, it depends only on $r$; by the lemma, it is written $\Delta\rho=\eta\circ\rho$ where 
\begin{equation}\label{eta}
\eta(r)=-(n-1)\cot(gr)+\frac{c}{g\sin(gr)}.
\end{equation}
The soul $\Sigma$ of $\Omega$ is the unique minimal member of the foliation; by \eqref{eta}, it is at distance $r_0$ to $\Sigma_+$, with $r_0$ satisfying $\eta(r_0)=0$, that is:
$$
\cos(gr_0)=\dfrac{c}{(n-1)g}, \quad\text{hence} \quad r_0=\frac 1g\arccos\Big(\frac{c}{(n-1)g}\Big).
$$
By assumption $\Omega$ is an isoparametric tube around $\Sigma$: if $d$ denotes the distance function to $\Sigma$, then the equidistant set $d^{-1}(r)$ has two components $\Gamma_1(r)$ and $\Gamma_2(r)$. 
Now the component which is closer to $\Sigma_+$, say $\Gamma_1(r)$, is at constant distance $r_0-r$ to $\Sigma_+$ while the other is at distance $r_0+r$. 
It is also clear that, if we choose the inner unit normal to $\bd\Omega$, then the mean curvature of $\Gamma_1(r)$ is $\eta(r_0-r)$ while the mean curvature of $\Gamma_2(r)$ is $-\eta(r_0+r)$. In order for $\Omega$ to be an isoparametric tube, the two components must have the same mean curvature, hence:
$$
\eta(r_0-r)=-\eta(r_0+r)
$$
for all $r$ in the appropriate range. Looking at the formula \eqref{eta} we see easily that $\eta(r)$ is odd with respect to $r=r_0$ if and only if $c=0$, in which case $r_0=\frac{\pi}{2g}$. 

\smallskip

The conclusion is that if $\Omega$ is an isoparametric tube with two boundary components then b) holds.  

\smallskip

Conversely, if either a) or b) hold, the $\Omega$ is shown to be an isoparametric tube by the same arguments as above. The proof is complete. 

\end{proof}

\addcontentsline{toc}{chapter}{Bibliography}
\bibliographystyle{plain}
\bibliography{heatcontent}

\begin{thebibliography}{10}

\bibitem{Ba}
C.~Bandle.
\newblock Isoperimetric inequalities and applications.
\newblock {\em Pitman Advanced Publishing Program, Marshfield, MA}, 1980.

\bibitem{vdBG1}
M.~van~den Berg and P.~Gilkey.
\newblock Heat content asymptotics of a {R}iemannian manifold with boundary.
\newblock {\em J. Funct. Anal.}, 120:48--71, 1994.

\bibitem{vdBG2}
M.~van~den Berg and P.~Gilkey.
\newblock The heat equation with inhomogeneous {D}irichlet boundary conditions.
\newblock {\em Comm. Anal. Geom.}, 7(2):279--294, 1999.

\bibitem{B}
M.~Berger.
\newblock Sur les premi\`eres valeurs propres des vari\'et\'es {R}iemanniennes.
\newblock {\em Compositio Math.}, 26:129--149, 1973.

\bibitem{BS}
A.~Burchard and M.~Shmuckenschlager.
\newblock Comparison theorems for exit times.
\newblock {\em GAFA}, 11(2):651--692, 2001.

\bibitem{CGL}
L.~Cadeddu, S.~Gallot, and A.~Loi.
\newblock Maximizing mean exit-time of the {B}rownian motion on {R}iemannian
  manifolds.
\newblock {\em Monatsh. Math}, 176(4):551--570, 2015.

\bibitem{Car}
E.~Cartan.
\newblock Familles de surfaces isoparam\'etriques dans les espaces a  courbure
  constante.
\newblock {\em Annali di Mat.}, (17):177--191, 1938.

\bibitem{Cec}
T.~Cecil.
\newblock Isoparametric and {D}upin hypersurfaces.
\newblock {\em SIGMA}, 4(Paper 062), 2008.

\bibitem{Ch}
Q.-S. Chi.
\newblock Isoparametric hypersurfaces with four principal curvatures {IV}.
\newblock {\em J. Differential Geom.}, 115(2):225--301, 2020.

\bibitem{CLmcD}
D.~Colloday, J.J. Langford, and P.~McDonald.
\newblock Comparison results, exit time moments and eigenvalues on {R}iemannian
  manifolds with a lower {R}icci curvature bound.
\newblock {\em J. Geom. Anal.}, 28(4):3906--3927, 2018.

\bibitem{DLmcD}
E.~Dryden, J.~J. Langford, and P.~McDonald.
\newblock Exit time moments and eigenvalue estimates.
\newblock {\em Bull. Lond. Math. Soc.}, 49(3):480--490, 2017.

\bibitem{ESI}
A.~El~Soufi and S.~Ilias.
\newblock Domain deformations and eigenvalues of the {D}irichlet {L}aplacian in
  a {R}iemannian manifold.
\newblock {\em Illinois J. Math.}, 51(2):645--666, 2007.

\bibitem{FMW}
M.M. Fall, I.A. Minlend, and T.~Weth.
\newblock Serrin's overdetermined problem on the sphere.
\newblock {\em Calc. Var.}, 57(3), 2018.

\bibitem{GeTa}
J.~Ge and Z.~Tang.
\newblock Geometry of isoparametric hypersurfaces in {R}iemannian manifolds.
\newblock {\em Asian J. Math.}, 18(1):117--126, 2014.

\bibitem{Gil1}
P.~Gilkey.
\newblock Heat content asymptotics.
\newblock {\em Geometric Aspects of Partial Differential Equations,
  Contemporary Mathematics, American Mathematical Society}, 242:125--134, 1999.

\bibitem{Gil2}
P.~Gilkey.
\newblock Asymptotic {F}ormulae in {S}pectral {G}eometry.
\newblock {\em Chapman and Hall/CRC}, 2004.

\bibitem{HMP2}
A.~Hurtado, S~Markovsen, and V.~Palmer.
\newblock Comparison of exit moment spectra for extrinsic metric balls.
\newblock {\em Potential Anal.}, 36(1):137--153, 2012.

\bibitem{HMP1}
A.~Hurtado, S~Markovsen, and V.~Palmer.
\newblock Estimates of the first {D}irichlet eigenvalue from exit time moment
  spectra.
\newblock {\em Math. Ann.}, 365(3-4):1603--1632, 2016.

\bibitem{MagSak}
R.~Magnanini and S.~Sakaguchi.
\newblock Heat conductors with a stationary isothermic surface.
\newblock {\em Annals Math. Second Ser.}, 156(3):931--946, 2002.

\bibitem{mcD}
P.~McDonald.
\newblock Isoperimetric conditions, {P}oisson problems, and diffusions in
  {R}iemannian manifolds.
\newblock {\em Potential Analysis}, 16:115--138, 2002.

\bibitem{mcDM}
P.~McDonald and R.~Meyers.
\newblock Dirichlet spectrum and heat content.
\newblock {\em Journal of Functional Analysis}, 200:215--232, 2003.

\bibitem{MoRos}
S.~Montiel and S.~Ros.
\newblock Compact hypersurfaces: the {A}lexandrov theorem for higher order mean
  curvatures.
\newblock {\em Differential Geometry (B. Lawson ed.), Pitman Mono. Longman, New
  York}, 52:279--296, 1991.

\bibitem{Mu1}
H.~F. M\"unzner.
\newblock Isoparametrische hyperflachen in spharen {I}.
\newblock {\em Math. Ann.}, 251:57--71, 1980.

\bibitem{Mu2}
H.~F. M\"unzner.
\newblock Isoparametrische hyperflachen in spharen {II}.
\newblock {\em Math. Ann.}, 256:215--232, 1981.

\bibitem{Oza}
S.~Ozawa.
\newblock Hadamard's variation of the {G}reen kernels of heat equations and
  their traces {I}.
\newblock {\em J. Math. Soc. Japan}, 34(3):455--473, 1982.

\bibitem{Po}
G.~Polya.
\newblock Torsional rigidity, principal frequency, electrostatic capacity and
  symmetrization.
\newblock {\em Q. Appl. Math.}, 6:267--277, 1948.

\bibitem{PoSz}
G.~Polya and G.~Szego.
\newblock Isoperimetric inequalities in mathematical physics.
\newblock {\em Annals of Mathematics Studies, Princeton University Press}, 27,
  1951.

\bibitem{RS}
D.B. Ray and I.M. Singer.
\newblock R-torsion and the {L}aplacian on {R}iemannian manifolds.
\newblock {\em Advances in Math.}, 7:145--210, 1971.

\bibitem{RiRo2}
Luca Rizzi and Tommaso Rossi.
\newblock Heat content asymptotics for sub-riemannian manifolds.
\newblock {\em Journal de Math\'ematiques Pures et Appliqu\'es}, 2020.

\bibitem{Sak}
Shigeru Sakaguchi.
\newblock Some characterizations of parallel hyperplanes in multi-layered heat
  conductors.
\newblock {\em Journal de Math\'ematiques Pures et Appliqu\'es}, 140:185 --
  210, 2020.

\bibitem{S4}
A.~Savo.
\newblock Uniform estimates and the whole asymptotic series of the heat content
  on manifolds.
\newblock {\em Geometriae Dedicata}, 73(2):181--214, 1998.

\bibitem{S3}
A.~Savo.
\newblock Asymptotics of the heat flow on a manifold with smooth boundary.
\newblock {\em Comm. Anal. Geom.}, 12(3):671--702, 2004.

\bibitem{S1}
A.~Savo.
\newblock Heat flow, heat content and the isoparametric property.
\newblock {\em Math. Annalen}, 366(3-4):1089--1136, 2016.

\bibitem{S2}
A.~Savo.
\newblock Geometric rigidity of constant heat flow.
\newblock {\em Calc.Var.}, 57(6):156, 2018.

\bibitem{Ser}
J.~Serrin.
\newblock A symmetry problem in potential theory.
\newblock {\em Arch. Ration. Mech. Anal.}, 43:304--318, 1971.

\bibitem{Shk}
V.~Shklover.
\newblock Schiffer problem and isoparametric hypersurfaces.
\newblock {\em Revista Mat. Iberoam.}, 16(3):529--569, 2000.

\bibitem{Sol1}
B.~Solomon.
\newblock The harmonic analysis of cubic isoparametric minimal hypersurfaces
  {I}: Dimensions 3 and 6.
\newblock {\em American J. Math}, 112:151--203, 1990.

\bibitem{Sol2}
B.~Solomon.
\newblock The harmonic analysis of cubic isoparametric minimal hypersurfaces
  {II}: Dimensions 12 and 24.
\newblock {\em American J. Math}, 112:205--241, 1990.

\bibitem{TY}
Z.~Tang and W.~Yan.
\newblock Isoparametric foliation and {Y}au conjecture on the first eigenvalue.
\newblock {\em J. Differ. Geom.}, 94:521--540, 2013.

\bibitem{Tho}
G.~Thorbergsson.
\newblock A survey on isoparametric hypersurfaces and their generalizations.
\newblock {\em In: Dillen, F.J.E., Vestraelen, L.C.A. (eds.) Handbook of
  Differential Geometry, Elsevier Science B.V.}, 2000.

\end{thebibliography}

\bigskip
\noindent
\small
Alessandro Savo \\
 Dipartimento SBAI, Sezione di Matematica,
Sapienza Universit\`a di Roma\\
Via Antonio Scarpa 16\\
00161 Roma, Italy

\noindent alessandro.savo@uniroma1.it

\end{document}